\newtheorem{lem}{Lemma}[section]
\newtheorem{prop}[lem]{Proposition}
\newtheorem{thm}[lem]{Theorem}
\newtheorem{Defn}[lem]{Definition}
\newtheorem{Ex}[lem]{Example}
\newtheorem{Question}[lem]{Question}
\newtheorem{Property}[lem]{Property}
\newtheorem{Properties}[lem]{Properties}
\newtheorem{Discussion}[lem]{Remark}
\newtheorem{Construction}[lem]{Construction}
\newtheorem{Notation}[lem]{Notation}
\newtheorem{Fact}[lem]{Fact}
\newtheorem{Outline}[lem]{Outline}
\newtheorem{Assumption}[lem]{Assumption}
\newtheorem{Notationdefinition}[lem]{Definition/Notation}
\newtheorem{Remarkdefinition}[lem]{Remark/Definition}
\newtheorem{Subprops}{}[lem]
\newtheorem{Para}[lem]{}
\newtheorem{Step}[lem]{Step}
\newenvironment{defn}{\begin{Defn}\rm}{\end{Defn}}
\newenvironment{ex}{\begin{Ex}\rm}{\end{Ex}}
\newenvironment{fact}{\begin{Fact}\rm}{\end{Fact}}
\newenvironment{outline}{\begin{Outline}\rm}{\end{Outline}}
\newenvironment{disc}{\begin{Discussion}\rm}{\end{Discussion}}
\newenvironment{step}{\begin{Step}\rm}{\end{Step}}
\newcommand{\catd}{\mathsf{D}}
\newcommand{\pd}{\operatorname{pd}}	
\newcommand{\gdim}{\mathrm{G}\text{-}\!\dim}
\newcommand{\id}{\operatorname{id}}
\newcommand{\depth}{\operatorname{depth}}
\newcommand{\ann}{\operatorname{Ann}}
\newcommand{\type}{\operatorname{type}}
\newcommand{\ext}{\operatorname{Ext}}	
\newcommand{\rhom}{\mathbf{R}\!\operatorname{Hom}}	
\newcommand{\HH}{\operatorname{H}}
\newcommand{\Hom}{\operatorname{Hom}}	
\newcommand{\coker}{\operatorname{Coker}}
\newcommand{\spec}{\operatorname{Spec}}
\newcommand{\tor}{\operatorname{Tor}}
\newcommand{\shift}{\mathsf{\Sigma}}
\newcommand{\Ker}{\operatorname{Ker}}
\newcommand{\tatetor}{\widehat{\tor}}
\newcommand{\tateext}{\widehat{\ext}}
\newcommand{\ideal}[1]{\mathfrak{#1}}
\newcommand{\m}{\ideal{m}}
\newcommand{\ol}{\overline}
\newcommand{\bbz}{\mathbb{Z}}
\newcommand{\xra}{\xrightarrow}
\newcommand{\y}{\mathbf{y}}
\newcommand{\x}{\mathbf{x}}
\newcommand{\cd}{C^{\dagger}}
\renewcommand{\geq}{\geqslant}
\renewcommand{\leq}{\leqslant}
\renewcommand{\ker}{\Ker}
\renewcommand{\hom}{\Hom}
\numberwithin{equation}{lem}
\begin{document}

\bibliographystyle{amsplain}

\author[Jorgensen]{David A. Jorgensen}

\address{David A. Jorgensen, Department of Mathematics,
University of Texas at Arlington,
Arlington, TX  76019 USA}

\email{djorgens@uta.edu}

\urladdr{http://dreadnought.uta.edu/\~{}dave/}

\author[Leuschke]{Graham J. Leuschke}

\address{Graham J. Leuschke, Mathematics Department,
215 Carnegie Hall,
Syracuse University,
Syracuse, NY 13244 USA}

\email{gjleusch@math.syr.edu}

\urladdr{http://www.leuschke.org/}

\author[Sather-Wagstaff]{Sean Sather-Wagstaff}

\address{Sean Sather-Wagstaff,
Department of Mathematics,
NDSU Dept \# 2750,
PO Box 6050,
Fargo, ND 58108-6050
USA }

\email{Sean.Sather-Wagstaff@ndsu.edu}

\urladdr{http://www.ndsu.edu/pubweb/\~{}ssatherw/}

\thanks{D.\ Jorgensen and S.\ Sather-Wagstaff were partly supported by NSA grants.
G.\ Leuschke was partly supported by NSF grant DMS 0556181.
}

\title{Presentations of rings with non-trivial semidualizing modules}

%\date{\today}

%\dedicatory{}

\keywords{Gorenstein rings, semidualizing modules, self-orthogonal modules, Tor-independence, Tate Tor, Tate Ext}
\subjclass[2000]{13C05, 13D07, 13H10}

\begin{abstract}
Let $R$ be a commutative noetherian local ring.
A finitely generated $R$-module $C$ is \emph{semidualizing} if it is self-orthogonal and satisfies the condition $\Hom_R(C,C)\cong R$.
We prove that a Cohen-Macaulay ring $R$ with dualizing module $D$ admits a semidualizing module $C$ satisfying $R\ncong C \ncong D$
if and only if it is a homomorphic image of a Gorenstein ring in which the defining ideal decomposes in a cohomologically independent way. This expands on a well-known result of Foxby, Reiten and Sharp saying that $R$ admits a dualizing module if and only if
$R$ is Cohen--Macaulay and a homomorphic image of a local Gorenstein ring.
\end{abstract}

\maketitle

\section{Introduction} \label{sec00}

Throughout this paper
$(R,\m,k)$ is a commutative noetherian local ring.

A finitely generated $R$-module $C$ is
\emph{self-orthogonal} if $\ext^i_R(C,C)=0$ for all $i\geq 1$.
Examples of self-orthogonal $R$-modules include the finitely generated free $R$-modules
and the dualizing module of
Grothendieck.
(See Section~\ref{sec01}
for definitions and background information.)
Results of
Foxby~\cite{foxby:gmarm},
Reiten~\cite{reiten:ctsgm}
and Sharp~\cite{sharp:gmccmlr}
precisely characterize the local rings which possess a dualizing module:
the ring $R$ admits a dualizing module if and only if
$R$ is Cohen--Macaulay
and there exist a Gorenstein local ring $Q$ and an ideal $I\subset Q$
such that $R\cong Q/I$.

The point of this paper is to similarly characterize the local
Cohen--Macaulay rings with a dualizing module which admit certain other self-orthogonal modules.
The specific self-orthogonal modules of interest are the \emph{semidualizing}
$R$-modules, that is, those self-orthogonal $R$-modules
satisfying $\hom_R(C,C)\cong R$. A free $R$-module of rank 1
is semidualizing, as is a dualizing $R$-module, when one exists.
We say that a semidualizing is \emph{non-trivial} if it is neither free
nor dualizing.

Our main theorem is the following expansion of the aforementioned
result of Foxby, Reiten and Sharp; we prove it in Section~\ref{sec02}.
It shows, assuming the existence of a dualizing module, that $R$ has a non-trivial semidualizing module if and only if $R$ is Cohen-Macaulay and
$R\cong Q/(I_1+I_2)$ where $Q$ is Gorenstein and the rings
$Q/I_1$ and $Q/I_2$ enjoy  considerable
cohomological vanishing over $Q$.
Thus, it addresses both of  the following questions:
what conditions guarantee that $R$ admits a non-trivial semidualizing module, and
what are the ramifications of the existence of such a module?

\begin{thm} \label{thm0001}
Let $R$ be a local Cohen--Macaulay ring with a dualizing module.
Then  $R$ admits a semidualizing module that is neither dualizing nor free
if and only if there exist
a Gorenstein local ring $Q$ and
ideals $I_1,I_2\subset Q$ satisfying the following conditions:
\begin{enumerate}[\quad\rm(1)]
\item \label{thm0001a}
There is a ring isomorphism
$R\cong Q/(I_1+I_2)$;
\item \label{thm0001b}
For $j=1,2$ the quotient ring $Q/I_j$ is Cohen--Macaulay
and not Gorenstein;
\item \label{thm0001d}
For all $i\in\bbz$, we have the following vanishing of Tate cohomology modules:
$\tatetor^Q_i(Q/I_1,Q/I_2)=0=\tateext^i_Q(Q/I_1,Q/I_2)$;
\item \label{thm0001e}
There exists an integer $c$ such that $\ext^c_Q(Q/I_1,Q/I_2)$ is not cyclic; and
\item \label{thm0001c}
For all $i\geq 1$, we have $\tor^Q_i(Q/I_1,Q/I_2)=0$; in particular,
there is an equality $I_1\cap I_2=I_1I_2$.
\end{enumerate}
\end{thm}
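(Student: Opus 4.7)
The theorem is an ``if and only if,'' and I would handle the two directions separately.

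For the backward direction, set $S_j = Q/I_j$ for $j = 1,2$; by hypothesis each $S_j$ is Cohen--Macaulay and a homomorphic image of the Gorenstein ring $Q$, so Foxby--Reiten--Sharp furnishes a dualizing module $D_j$ for $S_j$. Condition \textup{(5)} yields $R \cong S_1 \lotimes_Q S_2$, and the natural candidate for a semidualizing $R$-module is
\[
C := D_1 \otimes_Q S_2,
\]
regarded as a module over $R$. I would verify that $C$ is semidualizing by a base-change computation of $\rhom_R(C,C)$ through $Q$, reducing to the identity $\rhom_{S_1}(D_1,D_1) \simeq S_1$; the Tate-vanishing in \textup{(3)} together with the ordinary Tor-vanishing in \textup{(5)} should be exactly what is needed to make the relevant spectral sequence collapse onto the expected isomorphism. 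Non-triviality is then argued from the remaining hypotheses: $C \cong R$ would force $D_1$ to be free over $S_1$, contradicting the non-Gorensteinness of $S_1$ in \textup{(2)}; and $C \cong D$ (where $D \cong D_1 \otimes_Q D_2$ is the dualizing module of $R$) would collapse $\ext^c_Q(S_1,S_2)$ to something cyclic for every $c$, contradicting \textup{(4)}.

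For the forward direction, let $C$ be a non-trivial semidualizing $R$-module and set $C^\dagger := \Hom_R(C,D)$, which is again non-trivial semidualizing, with $C^{\dagger\dagger} \cong C$ and with $C \lotimes_R C^\dagger \simeq D$ in the derived category. Foxby--Reiten--Sharp gives a single Gorenstein presentation $R \cong P/J$, but this does not by itself reveal the pair $(C,C^\dagger)$. The strategy is to enlarge $P$ to a Gorenstein local ring $Q$ equipped with Cohen--Macaulay quotients $Q/I_1, Q/I_2$ whose respective dualizing modules pull back along the surjections $Q/I_j \twoheadrightarrow R$ to $C$ and $C^\dagger$. Once such a presentation is arranged, item \textup{(1)} holds by construction, item \textup{(2)} because $C$ and $C^\dagger$ are not free, and item \textup{(5)} from the identification $C \lotimes_R C^\dagger \simeq D$ transferred to the $Q$-level; item \textup{(3)} would follow from the rigidity that dagger-duality imposes between $C$ and $C^\dagger$, and item \textup{(4)} from $C \ncong D$.

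The hardest step is the construction of $Q$ and the ideals $I_1,I_2$ in the forward direction. I expect this to require a careful extension-of-scalars procedure, perhaps assembling $Q$ from a Gorenstein lift of a trivial extension or idealization built out of $C$, and then locating $I_1$ and $I_2$ as the defining ideals of two embedded Cohen--Macaulay subrings witnessing $C$ and $C^\dagger$ as dualizing modules. Verifying that the resulting pair genuinely satisfies the strong Tor and Tate vanishing in \textup{(3)} and \textup{(5)}---rather than merely being compatible with them---is likely the step requiring the most work, and is the point at which the cohomological machinery for semidualizing modules developed earlier in the paper must be deployed in full force.
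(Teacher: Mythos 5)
Your high-level strategy matches the paper's in both directions, and in particular you correctly anticipate that the hard, forward direction rests on building $Q$ as an iterated trivial extension out of $C$, $C^\dagger=\Hom_R(C,D)$, and $D$. But the proposal stops short of an actual proof in both directions, and in the backward direction it is also vague at exactly the point where the Tate-vanishing hypothesis does real work.

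In the backward direction, your candidate $C := D_1\otimes_Q(Q/I_2)$ is reasonable; with the Tor-vanishing of \textup{(5)} it is isomorphic to $D_1\otimes_{Q/I_1}R$, which is semidualizing \emph{provided} $\gdim_{Q/I_1}(R)<\infty$. Establishing that finiteness is precisely the role of hypothesis \textup{(3)}, and ``the relevant spectral sequence collapses'' skips over it. What the paper actually does is take a Tate resolution $T\to P\to Q/I_1$ over $Q$ (which exists since $Q$ is Gorenstein), use \textup{(5)} to see that $P\otimes_QQ/I_2$ resolves $R$ over $Q/I_2$, and use \textup{(3)} to show that $T\otimes_QQ/I_2$ and $\Hom_{Q/I_2}(T\otimes_QQ/I_2,\,Q/I_2)$ are both exact, so that $T\otimes_QQ/I_2\to P\otimes_QQ/I_2\to R$ is a Tate resolution over $Q/I_2$. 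That is the substantive step and it is absent from your sketch. Your non-triviality argument also swaps the roles of \textup{(2)} and \textup{(4)}: you deduce ``$C$ not free'' from non-Gorensteinness and ``$C$ not dualizing'' from non-cyclicity of $\ext_Q^c$, whereas the paper deduces ``$C$ not free'' from \textup{(4)} (since $\ext^i_Q(Q/I_1,Q/I_2)\cong\ext^i_{Q/I_2}(R,Q/I_2)$ forces the semidualizing module to be non-cyclic) and ``$C$ not dualizing'' from \textup{(2)} via a Bass-series comparison. Your version of the first step is defensible (a cyclic $D_1\otimes_{Q/I_1}R$ forces $D_1$ cyclic by Nakayama, hence $Q/I_1$ Gorenstein), but the claim that $C\cong D$ ``would collapse $\ext^c_Q(Q/I_1,Q/I_2)$ to something cyclic for every $c$'' is asserted, not argued, and it is not obviously true without the explicit identifications the paper establishes.

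In the forward direction you explicitly defer the construction (``I expect this to require a careful extension-of-scalars procedure'') and the verification of conditions \textup{(3)} and \textup{(5)} (``likely the step requiring the most work''). The paper's construction is $Q=(R\ltimes C)\ltimes\Hom_R(R\ltimes C,D)\cong R\oplus C\oplus C^\dagger\oplus D$, with $I_1=0\oplus0\oplus C^\dagger\oplus D$ and $I_2=0\oplus C\oplus 0\oplus D$, and the bulk of the proof (several lemmas) is devoted to verifying the five conditions; for instance, the Tor-vanishing in \textup{(5)} is proved by first showing $\tor^R_i(R\ltimes C,\,R\ltimes C^\dagger)=0$ using $\tor^R_i(C,C^\dagger)=0$, then transferring across base changes, and the Tate-vanishing in \textup{(3)} uses the self-duality $\Hom_Q(Q/I_1,Q)\cong I_1$ together with the exact sequence from Avramov--Martsinkovsky relating $\tateext^0$ to $\Hom$ and a tensor product. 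Those verifications are the heart of the theorem and are exactly what your proposal leaves out. As it stands the proposal is a correct outline of the approach, but not a proof.
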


A prototypical example of a ring admitting non-trivial semidualizing modules is the following.

\begin{ex} \label{vasex}
Let $k$ be a field and set $Q=k[\![X,Y,S,T]\!]$. The ring
$$R=
Q/(X^2,XY,Y^2,S^2,ST,T^2)
=Q/[(X^2,XY,Y^2)+(S^2,ST,T^2)]
$$
is local with maximal ideal $(X,Y,S,T)R$.
It is artinian of socle dimension 4, hence Cohen--Macaulay and non-Gorenstein.
With $R_1=Q/(X^2,XY,Y^2)$
it follows that the $R$-module $\ext^2_{R_1}(R,R_1)$
is semidualizing and neither dualizing nor free;
see~\cite[p.~92, Example]{vasconcelos:dtmc}.
\end{ex}

Proposition~\ref{thm9901} shows how Theorem~\ref{thm0001}
can be used to construct numerous rings admitting non-trivial
semidualizing modules.  To complement this, the following example shows
that rings that do not admit non-trivial semidualizing modules are easy to come by.

\begin{ex} \label{intex}
Let $k$ be a field. The ring $R=k[X,Y]/(X^2,XY,Y^2)$
is local with maximal ideal $\m=(X,Y)R$.
It is artinian of socle dimension 2, hence Cohen--Macaulay and non-Gorenstein. 
From the equality $\m^2=0$, it is straightforward
to deduce that the only semidualizing $R$-modules, up to isomorphism,
are the ring itself and the dualizing module; see~\cite[Prop.\ (4.9)]{vasconcelos:dtmc}.
\end{ex}

\section{Background on Semidualizing Modules} \label{sec01}

We begin with relevant definitions.
The following notions were introduced independently
(with different terminology) by
Foxby~\cite{foxby:gmarm},
Golod~\cite{golod:gdagpi},
Grothendieck~\cite{grothendieck:tdfac, hartshorne:lc},
Vasconcelos~\cite{vasconcelos:dtmc} and
Wakamatsu~\cite{wakamatsu:mtse}.

\begin{defn} \label{d0201}
Let $C$ be an $R$-module. The \emph{homothety homomorphism}
is the map
$\chi^R_C\colon R\to\hom_R(C,C)$ given by
$\chi^R_C(r)(c)=rc$.

The $R$-module $C$ is \emph{semidualizing} if
it satisfies the following conditions:
\begin{enumerate}[\quad(1)]
\item  \label{d0201a}
The $R$-module $C$ is finitely generated;
\item  \label{d0201b}
The homothety map $\chi^R_C\colon R\to\hom_R(C,C)$, is an isomorphism; and
\item  \label{d0201c}
For all $i\geq 1$, we have $\ext^i_R(C,C)=0$.
\end{enumerate}
An $R$-module $D$ is \emph{dualizing} if it is semidualizing and
has finite injective dimension.
\end{defn}

Note that the $R$-module $R$ is semidualizing, so that every
local ring admits a semidualizing module.

\begin{fact} \label{f0206}
Let $C$ be a semidualizing $R$-module.
It is straightforward to show that a sequence
$\x=x_1,\ldots,x_n\in\m$ is $C$-regular if and only if it is $R$-regular.
In particular, we have
$\depth_R(C)=\depth(R)$; see, e.g., \cite[(1.4)]{sather:bnsc}. Thus, when $R$ is Cohen--Macaulay,
every semidualizing $R$-module is a maximal Cohen--Macaulay module.
On the other hand, if $R$ admits a dualizing module, then $R$ is Cohen--Macaulay
by~\cite[(8.9)]{sharp:gm}.
As $R$ is local, if it admits a dualizing module, then its
dualizing module is unique up to isomorphism; see, e.g.~\cite[(3.3.4(b))]{bruns:cmr}.
\end{fact}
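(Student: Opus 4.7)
The fact collects four standard assertions about a semidualizing module $C$, and my plan is to address them in order, citing the last two.

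For the equivalence of $C$-regular and $R$-regular sequences $\x \subseteq \m$, I would induct on $n$. The crux of the base case $n=1$ is the equality $\ass_R(C) = \ass_R(R)$. This follows from the standard formula $\ass_R(\hom_R(M,N)) = \supp_R(M) \cap \ass_R(N)$ applied to the isomorphism $\hom_R(C,C) \cong R$, combined with $\supp_R(C) = \spec(R)$ (itself obtained by localizing the homothety, since if $C_\p = 0$ then $R_\p \cong \hom_{R_\p}(C_\p, C_\p) = 0$). Hence the non-zero-divisors of $R$ and of $C$ in $\m$ coincide. For the inductive step it suffices to show that when $x \in \m$ is $R$-regular, the quotient $C/xC$ is semidualizing over $R/xR$; the inductive hypothesis applied to the pair $(R/xR,\, C/xC)$ then finishes. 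To verify this, apply $\hom_R(-, C)$ to $0 \to C \xrightarrow{x} C \to C/xC \to 0$ and use $\ext^{\geq 1}_R(C,C) = 0$ to obtain $\ext^1_R(C/xC, C) \cong R/xR$ and $\ext^i_R(C/xC, C) = 0$ for $i \geq 2$; Rees's isomorphism $\ext^{i+1}_R(C/xC, C) \cong \ext^i_{R/xR}(C/xC, C/xC)$ (valid since $x$ annihilates $C/xC$ and is $C$-regular) then delivers the semidualizing structure of $C/xC$ over $R/xR$.

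The depth equality $\depth_R(C) = \depth(R)$ follows at once by choosing $\x$ to be a maximal $R$-regular sequence in $\m$. If $R$ is Cohen--Macaulay, this forces $\depth_R(C) = \dim(R)$, and since $\depth_R(C) \leq \dim_R(C) \leq \dim(R)$ always, $C$ is maximal Cohen--Macaulay.

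For the last two assertions I would defer to the cited references. That a ring with a dualizing module is Cohen--Macaulay is \cite[(8.9)]{sharp:gm}; one argument routes through local duality, and a more modern one invokes the Bass conjecture (now a theorem) applied to the finite injective dimension of the dualizing module. Uniqueness of the dualizing module over a local ring is \cite[(3.3.4(b))]{bruns:cmr}; one can prove it by showing $\rhom_R(D, D')$ is concentrated in a single degree and has rank one, forcing $D' \cong D$. The main obstacle in the whole statement is the inductive reduction above, in particular the invocation of Rees's lemma to transfer the semidualizing property to $R/xR$; everything else is either an immediate consequence or a citation.
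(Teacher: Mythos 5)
Your argument is correct and is the standard one: the paper treats the regular-sequence equivalence as a ``straightforward'' assertion and defers to citations for the depth equality, the Cohen--Macaulayness, and uniqueness, exactly as you do, and your reduction via $\ass_R(C)=\ass_R(R)$ in the base case and Rees's lemma in the inductive step is the intended route. The only detail worth recording is that Rees's lemma produces an \emph{abstract} isomorphism $\hom_{R/xR}(C/xC,C/xC)\cong R/xR$, and one should note this forces the homothety map itself to be an isomorphism: the identity endomorphism of $C/xC$ lies outside $\m\hom_{R/xR}(C/xC,C/xC)$ because it induces the identity on $C/(\m C+xC)\neq 0$, so the homothety is surjective, hence bijective since a surjective endomorphism of a noetherian module is injective.
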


The following definition and fact justify the term ``dualizing''.

\begin{defn} \label{d0204}
Let $C$ and $B$ be  $R$-modules.
The natural \emph{biduality homomorphism}
$\delta^B_C\colon C\to\hom_R(\hom_R(C,B),B)$
is given by
$\delta^B_C(c)(\phi)=\phi(c)$.
When $D$  is a dualizing $R$-module,
we set $\cd=\hom_R(C,D)$.
\end{defn}

\begin{fact} \label{f0203}
Assume that $R$ is Cohen--Macaulay with dualizing module $D$.
Let $C$ be a semidualizing $R$-module.
Fact~\ref{f0206} says that $C$ is a maximal Cohen--Macaulay $R$-module.
From standard duality theory, for all $i\neq 0$ we have
$$
\ext^i_R(C,D)=0=\ext^i_R(\cd,D)$$
and the natural biduality homomorphism
$\delta^D_C\colon C\to\hom_R(\cd,D)$ is an isomorphism;
see, e.g., \cite[(3.3.10)]{bruns:cmr}.
The $R$-module $\cd$ is  semidualizing
by~\cite[(2.12)]{christensen:scatac}. Also,
the evaluation map $C\otimes_R\cd\to D$ given by $c\otimes\phi\mapsto\phi(c)$
is an isomorphism, and one has $\tor^R_i(C,\cd)=0$ for all $i\geq 1$
by~\cite[(3.1)]{gerko:sdc}.
\end{fact}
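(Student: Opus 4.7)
The plan is to deduce each assertion from standard Cohen--Macaulay duality, using as the key input the classical equivalence (for $R$ CM local with dualizing module $D$) that a finitely generated module $M$ is MCM if and only if $\ext^i_R(M,D)=0$ for all $i\geq 1$, and that on this subcategory the functor $(-)^\dagger=\Hom_R(-,D)$ is an exact duality. Fact~\ref{f0206} already places the semidualizing module $C$ in this subcategory, so the machinery applies directly.

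First I would establish the Ext vanishing for $C$ against $D$ and, simultaneously, that $\cd$ is MCM with $\ext^i_R(\cd,D)=0$ for $i\geq 1$. The standard route chooses a maximal $R$-regular sequence $\x\subseteq\m$, which by Fact~\ref{f0206} is regular on $C$ and on $D$ as well, and reduces modulo $\x$ to the artinian case. There $D/\x D$ is the injective envelope of the residue field, so higher Ext vanishing into $D/\x D$ is automatic, and change of rings lifts this back to $R$. The biduality isomorphism $\delta^D_C$ emerges from the same reduction: modulo $\x$ it becomes Matlis biduality, hence an isomorphism, and Nakayama combined with the Ext vanishing promotes this to $R$. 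These steps are assembled in~\cite[(3.3.10)]{bruns:cmr}.

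Once $(-)^\dagger$ is known to restrict to a duality on MCM modules, I would verify that $\cd$ is semidualizing. The homothety map $\chi^R_\cd$ can be identified with $(-)^\dagger$ applied to the isomorphism $\chi^R_C$ composed with biduality on $R$, hence is itself an isomorphism. The self-Ext vanishing $\ext^i_R(\cd,\cd)=0$ for $i\geq 1$ follows by a dimension-shifting argument that uses $\ext^{>0}_R(C,D)=\ext^{>0}_R(\cd,D)=0$ together with $\ext^{>0}_R(C,C)=0$; the full execution is carried out in~\cite[(2.12)]{christensen:scatac}.

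The main technical obstacle is the evaluation isomorphism $C\otimes_R\cd\to D$ together with the Tor vanishing $\tor^R_i(C,\cd)=0$ for $i\geq 1$. The cleanest path is via the derived category: tensor--hom adjunction and the pieces already proved give
\[
\rhom_R(C\lotimes_R\cd,D)\simeq\rhom_R(C,\rhom_R(\cd,D))\simeq\rhom_R(C,C)\simeq R,
\]
so dualizing once more yields $C\lotimes_R\cd\simeq\rhom_R(R,D)\simeq D$ in the derived category. The hypercohomology spectral sequence, controlled by the Ext vanishings already established, then degenerates to produce both the evaluation isomorphism in degree zero and the Tor vanishing in positive degrees, as in~\cite[(3.1)]{gerko:sdc}.
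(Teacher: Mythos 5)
The paper gives no proof of this Fact; it is background material, delegated entirely to the citations \cite[(3.3.10)]{bruns:cmr}, \cite[(2.12)]{christensen:scatac} and \cite[(3.1)]{gerko:sdc}. Your reconstruction of the first two parts matches those sources: reduction modulo a maximal regular sequence (simultaneously regular on $R$, $C$ and $D$ by Fact~\ref{f0206}) to the artinian case, where $\hom_R(-,D)$ is Matlis duality, is precisely the Bruns--Herzog argument, and identifying $\chi^R_{\cd}$ with the $D$-dual of $\chi^R_{C}$ is how Christensen proceeds.

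Your route to the evaluation isomorphism and the Tor-vanishing, however, has a circularity at the final step. Passing from $\rhom_R(C\lotimes_R\cd,D)\simeq R$ to $C\lotimes_R\cd\simeq\rhom_R(R,D)\simeq D$ requires applying $\rhom_R(-,D)$ a second time and invoking biduality, i.e., it requires $C\lotimes_R\cd$ to be derived $D$-reflexive. That is automatic for homologically bounded complexes with finitely generated homology, but a priori $C\lotimes_R\cd$ is only bounded below: its degree-$i$ homology is $\tor^R_i(C,\cd)$, whose eventual vanishing is part of what you are trying to prove. The standard argument runs in the opposite direction, via Hom-evaluation: because $C$ is finitely generated and $\id_R(D)<\infty$, the natural morphism
$$C\lotimes_R\rhom_R(C,D)\longrightarrow\rhom_R(\rhom_R(C,C),D)\simeq\rhom_R(R,D)\simeq D$$
is an isomorphism in $\catd(R)$, and this delivers $C\otimes_R\cd\cong D$ and $\tor^R_i(C,\cd)=0$ for $i\geq 1$ in one stroke, with no reflexivity of the tensor product needed. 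Alternatively, you could first invoke the fact that a dualizing complex detects homological boundedness (so that $\rhom_R(C\lotimes_R\cd,D)\simeq R$ forces $C\lotimes_R\cd$ to be bounded) and then your double-dual step goes through. Either repair is short, but as written the step is not justified.
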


The following construction is also known as the ``idealization'' of $M$.
It was popularized by Nagata,
but goes back at least to Hochschild~\cite{hochschild:cgaa},
and the idea behind the construction appears in work of Dorroh~\cite{dorroh}.
It is the key idea for the
proof of the converse of Sharp's result~\cite{sharp:gmccmlr} given by
Foxby~\cite{foxby:gmarm} and
Reiten~\cite{reiten:ctsgm}.

\begin{defn} \label{d0203}
Let $M$ be an $R$-module.
The \emph{trivial extension} of $R$ by $M$
is the ring $R\ltimes M$, described as follows.
As an additive abelian group, we have $R\ltimes M = R\oplus M$.
The multiplication in $R\ltimes M$ is given by the formula
$$(r,m)(r',m')=(rr',rm'+r'm).$$
The multiplicative identity on $R\ltimes M$ is $(1,0)$.
We let $\epsilon_M\colon R\to R\ltimes M$
and $\tau_M\colon R\ltimes M\to R$ denote the
natural injection and surjection, respectively.
\end{defn}

The next assertions are straightforward to verify.

\begin{fact} \label{f0208}
Let $M$ be an $R$-module.
The trivial extension $R\ltimes M$ is a commutative ring with identity.
The maps $\epsilon_M$ and $\tau_M$ are ring homomorphisms, and
$\ker(\tau_M)=0\oplus M$. We have
$(0\oplus M)^2=0$, and so $\spec(R\ltimes M)$ is in order-preserving bijection
with $\spec(R)$.
It follows that $R\ltimes M$ is quasilocal and $\dim(R\ltimes M)=\dim(R)$.
If $M$ is finitely generated, then $R\ltimes M$ is also noetherian and
$$\depth(R\ltimes M)=\depth_R(R\ltimes M)=\min\{\depth(R),\depth_R(M)\}.$$
In particular, if $R$ is Cohen--Macaulay and $M$ is a maximal Cohen--Macaulay
$R$-module, then $R\ltimes M$ is Cohen--Macaulay as well.
\end{fact}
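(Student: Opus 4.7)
The plan is to verify the list of assertions in order, each one being essentially a bookkeeping exercise once the correct viewpoint is adopted.

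First I would dispatch the purely algebraic claims by direct calculation from the multiplication rule $(r,m)(r',m')=(rr',rm'+r'm)$: associativity, distributivity, and commutativity reduce to the corresponding properties of $R$ and the $R$-module structure on $M$, and $(1,0)$ is checked to be a two-sided identity. The formulas $\epsilon_M(r)=(r,0)$ and $\tau_M(r,m)=r$ then make it transparent that both are ring homomorphisms; $\ker(\tau_M)=0\oplus M$ is read off directly; and $(0,m)(0,m')=(0,0\cdot m'+0\cdot m)=(0,0)$ gives $(0\oplus M)^2=0$.

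Since $0\oplus M$ is a nilpotent ideal of $R\ltimes M$, it lies in every prime, so $\tau_M$ induces an order-preserving bijection between $\spec(R\ltimes M)$ and $\spec(R)$. Because $R$ is local with maximal ideal $\m$, this bijection identifies the unique maximal ideal of $R\ltimes M$ as $\m\oplus M$, giving the quasilocal property, and since the bijection preserves chains it yields $\dim(R\ltimes M)=\dim(R)$. When $M$ is finitely generated, $\epsilon_M$ realizes $R\ltimes M$ as a module-finite extension of the noetherian ring $R$, so $R\ltimes M$ is noetherian (as an $R$-module, hence as a ring).

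For the depth formula I would use the isomorphism $R\ltimes M\cong R\oplus M$ of $R$-modules, which gives $\depth_R(R\ltimes M)=\min\{\depth(R),\depth_R(M)\}$ from the standard behavior of depth under finite direct sums. The remaining equality $\depth(R\ltimes M)=\depth_R(R\ltimes M)$ is the change-of-rings principle applied to the finite local extension $R\to R\ltimes M$: writing $S=R\ltimes M$ and $\n=\m\oplus M$, we have $\m S=\m\oplus\m M$ and $\n^2\subseteq\m S$ (because $(0\oplus M)^2=0$), so $\sqrt{\m S}=\n$, and hence a sequence in $\n$ is $S$-regular on a finitely generated $S$-module $N$ if and only if it is $R$-regular on $N$ viewed over $R$; equivalently, the Ext characterization $\depth=\inf\{i:\ext^i(k,-)\neq 0\}$ yields the same number computed over $R$ or over $S$ since the residue fields agree and one can compare resolutions via $\epsilon_M$.

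The Cohen--Macaulay conclusion is then immediate: if $R$ is Cohen--Macaulay and $M$ is maximal Cohen--Macaulay, then $\depth(R)=\dim(R)=\depth_R(M)$, so the depth formula gives $\depth(R\ltimes M)=\dim(R)=\dim(R\ltimes M)$. The only step I expect to require any real thought is the equality $\depth(R\ltimes M)=\depth_R(R\ltimes M)$, since every other item is either definitional or a direct consequence of the nilpotence of $0\oplus M$; but the nilpotence itself delivers the radical identity $\sqrt{\m S}=\n$ that makes the change-of-rings argument routine.
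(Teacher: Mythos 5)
The paper offers no proof here, explicitly labeling the assertions ``straightforward to verify,'' so there is no argument to compare against; your job was simply to fill in the details, and you have done so correctly and at an appropriate level. Your organization is natural, and you correctly isolate the one nontrivial point, namely the equality $\depth(R\ltimes M)=\depth_R(R\ltimes M)$. The only thing I would tighten is the sentence ``a sequence in $\n$ is $S$-regular on $N$ if and only if it is $R$-regular on $N$ viewed over $R$'': as written this does not parse, since an element of $\n=\m\oplus M$ need not lie in the image of $\epsilon_M$. What you actually want is: a sequence drawn from $\m$ is $N$-regular over $R$ iff its image under $\epsilon_M$ is $N$-regular over $S$ (tautologically, since the module action is the same), so $\depth_R(N)$ equals the depth of $N$ with respect to the ideal $\m S$; then the radical identity $\sqrt{\m S}=\n$ (which you correctly derive from nilpotence of $0\oplus M$) gives $\depth(\m S,N)=\depth(\n,N)=\depth_S(N)$. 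Alternatively, this is immediate from local cohomology and independence of base: $\depth_R(N)=\inf\{i:H^i_{\m}(N)\neq 0\}=\inf\{i:H^i_{\n}(N)\neq 0\}=\depth_S(N)$ since $\sqrt{\m S}=\n$. Either patch closes the small gap, and the rest of the argument is correct as written.
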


Next, we discuss the correspondence between dualizing modules and
Gorenstein presentations given by the results of Foxby, Reiten and Sharp.

\begin{fact} \label{f0205}
Sharp~\cite[(3.1)]{sharp:gmccmlr}
showed that if $R$ is Cohen--Macaulay and a homomorphic image of a
local Gorenstein ring $Q$, then $R$ admits a dualizing module. The proof
proceeds as follows.
If $g=\depth(Q)-\depth(R)=\dim(Q)-\dim(R)$, then
$\ext^i_Q(R,Q)=0$ for $i\neq g$ and the module
$\ext^g_Q(R,Q)$ is dualizing for $R$.

The same idea gives the following.
Let $A$ be a local Cohen--Macaulay ring with a dualizing module $D$,
and assume that $R$ is Cohen--Macaulay and a module-finite $A$-algebra.
If $h=\depth(A)-\depth(R)=\dim(A)-\dim(R)$,
then $\ext^i_A(R,D)=0$ for $i\neq h$ and the module
$\ext^h_A(R,D)$ is dualizing for $R$.
\end{fact}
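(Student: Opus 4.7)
The plan is to derive both assertions from Grothendieck's local duality, applied over the base ring ($Q$ for Sharp's statement, $A$ for its generalization). For Sharp's result, write $R\cong Q/I$ and set $n=\dim Q$ and $d=\dim R$, so that $g=n-d$ because both rings are Cohen--Macaulay. The vanishing $\ext^i_Q(R,Q)=0$ for $i<g$ follows from the identification of $g$ with $\grade_Q(R)$, the least index at which this Ext is nonzero. For the vanishing in degrees $i>g$, I would invoke local duality over the Gorenstein ring $Q$: for every finitely generated $Q$-module $M$ there is a natural isomorphism
\[ H^{n-j}_{\m_Q}(M)\cong \Hom_Q\!\bigl(\ext^{j}_Q(M,Q),\,E_Q(k)\bigr). \]
Applying this with $M=R$ and using that $H^i_{\m_Q}(R)=H^i_{\m_R}(R)=0$ for $i\neq d$ by the Cohen--Macaulay property of $R$ shows that the Matlis dual of $\ext^j_Q(R,Q)$ vanishes for $j\neq g$; since this Ext is finitely generated and the completion map on finitely generated modules is injective, this forces $\ext^j_Q(R,Q)=0$ for $j\neq g$.

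Next I would verify that $D_R:=\ext^g_Q(R,Q)$ is dualizing for $R$. For finite injective dimension, apply $\Hom_Q(R,-)$ to a finite injective resolution $0\to Q\to E^0\to\cdots\to E^n\to 0$ of $Q$ over itself (of length $n$, since $Q$ is Gorenstein). By Hom--tensor adjunction each $\Hom_Q(R,E^i)$ is injective over $R=Q/I$, and the cohomology of this complex computes $\ext^*_Q(R,Q)$, which by the preceding step is concentrated in degree $g$; truncating yields an injective resolution of $D_R$ over $R$ of length at most $d$. For the homothety isomorphism and higher Ext vanishing, I would compute $H^d_{\m_R}(D_R)$ via local duality over $Q$: the relevant Ext is $\ext^g_Q(D_R,Q)$, which by the standard biduality for Cohen--Macaulay modules of grade $g$ over a Gorenstein ring is canonically isomorphic to $R$. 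Hence $H^d_{\m_R}(D_R)\cong E_R(k)$, identifying $D_R$ as a canonical module of $R$, and the semidualizing conditions follow from the canonical-module characterization.

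For the second statement the argument is parallel, with $A$ in place of $Q$ and the dualizing module $D$ in place of $Q$ (which is its own dualizing module in the Gorenstein case). Local duality for the Cohen--Macaulay ring $A$ with dualizing module $D$ reads
\[ H^{n-j}_{\m_A}(M)\cong \Hom_A\!\bigl(\ext^{j}_A(M,D),\,E_A(k)\bigr) \]
for $M$ a finitely generated $A$-module, where $n=\dim A$. Since $R$ is module-finite over $A$ and local, $\m_A R$ is $\m_R$-primary, so $H^i_{\m_A}(R)=H^i_{\m_R}(R)$, and this vanishes for $i\neq\dim R$ by the Cohen--Macaulay property of $R$, forcing $\ext^j_A(R,D)=0$ for $j\neq h$. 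The identification of $\ext^h_A(R,D)$ as dualizing for $R$ proceeds exactly as before, now applying $\Hom_A(R,-)$ to a finite injective resolution of $D$ over $A$ (which exists because $\id_A(D)<\infty$).

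The main obstacle is the last step in each case: passing from the vanishing and finite injective dimension of the candidate module to the full semidualizing conditions, namely that the homothety map is an isomorphism and $\ext^{\geq 1}_R(D_R,D_R)=0$. The cleanest route is to appeal to the characterization of the canonical module via $H^d_{\m_R}(D_R)\cong E_R(k)$, which requires careful tracking of local cohomology as one passes between $Q$ (or $A$) and $R$; alternatively, one may collapse the spectral sequence $\ext^p_R\bigl(D_R,\ext^q_Q(R,Q)\bigr)\Rightarrow\ext^{p+q}_Q(D_R,Q)$ using the vanishing from the first paragraph together with the biduality $\ext^g_Q(D_R,Q)\cong R$.
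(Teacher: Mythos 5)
Your proposal is correct and follows essentially the route the paper (and its citation to Sharp, cf.\ also \cite[(3.3.7)]{bruns:cmr}) has in mind: identify the candidate module as $\ext^g_Q(R,Q)$ (resp.\ $\ext^h_A(R,D)$), get the concentration of $\ext$ in the single degree $g=\grade_Q(R)$ from grade plus local duality, obtain finite injective dimension by applying $\Hom_Q(R,-)$ to a finite injective resolution, and deduce the semidualizing conditions from the collapsing change-of-rings spectral sequence together with the biduality $\ext^g_Q(\ext^g_Q(R,Q),Q)\cong R$. The only loose thread is that you sketch rather than execute the final verification of the homothety isomorphism and higher $\ext$-vanishing, but both routes you indicate are standard and do close the argument.
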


\begin{fact} \label{f0209}
Independently, Foxby~\cite[(4.1)]{foxby:gmarm}
and Reiten~\cite[(3)]{reiten:ctsgm}
proved the converse of Sharp's result from Fact~\ref{f0205}. Namely, they showed
that if $R$ admits a dualizing module,
then it is Cohen--Macaulay and a homomorphic image of a
local Gorenstein ring $Q$.
We sketch the proof here, as the main idea
forms the basis of our proof of Theorem~\ref{thm0001}.
See also, e.g., \cite[(3.3.6)]{bruns:cmr}.

Let $D$ be a dualizing $R$-module. It follows
from~\cite[(8.9)]{sharp:gm}
that $R$ is Cohen--Macaulay.
Set $Q=R\ltimes D$, which is Gorenstein with $\dim(Q)=\dim(R)$.
The natural surjection $\tau_D\colon Q\to R$ yields an presentation of $R$ as a homomorphic
image of the local Gorenstein ring $Q$.
\end{fact}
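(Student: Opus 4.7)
My plan is to follow exactly the trivial-extension construction already outlined: given a dualizing $R$-module $D$, set $Q=R\ltimes D$ and prove that $Q$ is a local Gorenstein ring for which the natural surjection $\tau_D\colon Q\to R$ presents $R$ as a quotient. Cohen--Macaulayness of $R$ is the easy half: it is the cited result \cite[(8.9)]{sharp:gm}, which we invoke as a black box. Everything else is about $Q$.

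First I would collect the formal properties of $Q$ by applying Fact~\ref{f0208} to $M=D$. Since $D$ is semidualizing, Fact~\ref{f0206} tells us $\depth_R(D)=\depth(R)$, so $D$ is a maximal Cohen--Macaulay $R$-module. Thus Fact~\ref{f0208} supplies: $Q$ is noetherian and quasilocal (hence local), $\dim(Q)=\dim(R)$, and
\[
\depth(Q)=\min\{\depth(R),\depth_R(D)\}=\depth(R)=\dim(R)=\dim(Q),
\]
so $Q$ is Cohen--Macaulay of the same dimension as $R$. It remains only to show $Q$ has finite self-injective dimension.

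The strategy for Gorensteinness is to exhibit a dualizing $Q$-module isomorphic to $Q$. The ring map $\epsilon_D\colon R\to Q$ realizes $Q$ as the module-finite $R$-algebra $R\oplus D$. Since $R$ admits the dualizing module $D$ and $Q$ is Cohen--Macaulay over $R$ with $\dim(Q)=\dim(R)$, Fact~\ref{f0205} (applied with $A=R$ and $h=0$) produces a dualizing $Q$-module, namely $\omega_Q:=\hom_R(Q,D)$. As an $R$-module,
\[
\omega_Q=\hom_R(R\oplus D,D)\cong \hom_R(R,D)\oplus\hom_R(D,D)\cong D\oplus R,
\]
using the semidualizing identity $\hom_R(D,D)\cong R$ from Definition~\ref{d0201}. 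The main step is to upgrade this to a $Q$-module isomorphism $\omega_Q\cong Q$. I would do this by defining $\psi\colon Q\to\omega_Q$ by $\psi(r,d)(r',d')=rd'+r'd$, verify directly from the multiplication in $Q$ that $\psi$ is $Q$-linear, and observe that under the $R$-module identification above it sends $(r,d)\mapsto(d,r)$, which is an isomorphism. Consequently $Q$ is its own dualizing module and is therefore Gorenstein.

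The main obstacle is precisely this last identification: one must confirm that $\omega_Q$, whose $Q$-action is defined by $((r,d)\cdot\phi)(q')=\phi((r,d)q')$, really is generated as a $Q$-module by the projection $(r',d')\mapsto d'$, with no hidden torsion or twisting. Granted that, the proof closes quickly: $\tau_D\colon Q\to R$ is a surjective ring homomorphism with $\ker\tau_D=0\oplus D$, expressing $R$ as a homomorphic image of the local Gorenstein ring $Q$, as required.
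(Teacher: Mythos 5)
Your proposal is correct and follows the same route as the paper's sketch: form $Q=R\ltimes D$, get $R$ Cohen--Macaulay from Sharp, and present $R$ as $Q/(0\oplus D)$ via $\tau_D$. The only difference is that you spell out the verification that $Q$ is Gorenstein (via the $Q$-linear isomorphism $Q\cong\hom_R(Q,D)$, whose $Q$-linearity and bijectivity check out), which the paper leaves implicit.
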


The next notion we need is
Auslander and Bridger's
G-dimension~\cite{auslander:adgeteac,auslander:smt}.
See also Christensen~\cite{christensen:gd}.

\begin{defn} \label{d0202}
A complex of $R$-modules
$$X=
\cdots\xra{\partial^X_{i+1}}X_{i}\xra{\partial^X_{i}}X_{i-1}\xra{\partial^X_{i-1}}\cdots
$$
is
\emph{totally acyclic} if it satisfies the following conditions:
\begin{enumerate}[\quad(1)]
\item  \label{d0202d}
Each $R$-module $X_i$ is  finitely generated and free; and
\item  \label{d0202e}
The complexes $X$ and $\Hom_R(X,R)$ are exact.
\end{enumerate}
An $R$-module $G$ is \emph{totally reflexive} if
there exists a totally acyclic complex of $R$-modules
such that $G\cong\coker(\partial^X_1)$;
in this event, the complex $X$ is a \emph{complete resolution} of $G$.
\end{defn}

\begin{fact} \label{f0299}
An $R$-module $G$ is totally reflexive if and only if
it satisfies the following:
\begin{enumerate}[\quad(1)]
\item  \label{d0202a}
The $R$-module $G$ is finitely generated;
\item  \label{d0202b}
The biduality map $\delta^R_G\colon G\to\hom_R(\hom_R(G,R),R)$, is an isomorphism; and
\item  \label{d0202c}
For all $i\geq 1$, we have $\ext^i_R(G,R)=0=\ext^i_R(\hom_R(G,R),R)$.
\end{enumerate}
See, e.g., \cite[(4.1.4)]{christensen:gd}.
\end{fact}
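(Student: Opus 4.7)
The plan is to argue both directions by connecting the existence of a totally acyclic complex to the three homological conditions. For the forward direction, I would extract the conditions from the structure of the complete resolution. For the reverse direction, I would construct a complete resolution by splicing an ordinary finite free resolution of $G$ with the $R$-dual of a finite free resolution of $\Hom_R(G,R)$.

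For $(\Rightarrow)$, suppose $G$ is totally reflexive with complete resolution $X$, and set $K_i = \ker(\partial^X_i)$, so $G \cong K_{-1}$. Condition (1) is immediate because $G = \coker(\partial^X_1)$ is the cokernel of a map between finitely generated free modules. For condition (3), the exactness of $X$ breaks into short exact sequences $0 \to K_i \to X_i \to K_{i-1} \to 0$; applying $\Hom_R(-,R)$ and using the exactness of $\Hom_R(X,R)$ along with $\ext^j_R(X_i, R) = 0$ for $j \geq 1$ (since each $X_i$ is free) gives $\ext^j_R(K_i, R) = 0$ for every $i$ and every $j \geq 1$ by dimension shifting, so in particular $\ext^j_R(G, R) = 0$. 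The dual complex $\Hom_R(X, R)$ is itself totally acyclic (its $R$-dual is canonically $X$ via the free-module biduality maps $\delta^R_{X_i}$, and $X$ is exact by hypothesis), and $\Hom_R(G,R)$ arises as a cokernel in it, so the same argument yields $\ext^j_R(\Hom_R(G,R), R) = 0$ for $j \geq 1$. For condition (2), compare $\delta^R_G$ with $\delta^R_{X_0}$ and $\delta^R_{X_1}$ using the commutative diagram obtained by applying $\Hom_R(\Hom_R(-,R),R)$ to $X_1 \to X_0 \to G \to 0$; the first two biduality maps are isomorphisms because $X_0, X_1$ are finitely generated and free, and the Ext-vanishing established above ensures exactness of the double-dual row, so a five-lemma argument concludes that $\delta^R_G$ is an isomorphism.

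For $(\Leftarrow)$, assume (1)-(3) and fix finitely generated free resolutions $P_\bullet \to G$ and $Q_\bullet \to \Hom_R(G, R)$. The vanishing $\ext^j_R(\Hom_R(G,R), R) = 0$ for $j \geq 1$ makes the dualized complex exact,
\[
0 \to \Hom_R(\Hom_R(G,R), R) \to \Hom_R(Q_0, R) \to \Hom_R(Q_1, R) \to \cdots,
\]
and the biduality isomorphism from (2) identifies the leftmost term with $G$. Splicing the resulting coresolution with the resolution $\cdots \to P_1 \to P_0 \to G \to 0$ at $G$ produces a doubly infinite complex
\[
X \colon \cdots \to P_1 \to P_0 \to \Hom_R(Q_0, R) \to \Hom_R(Q_1, R) \to \cdots
\]
of finitely generated free $R$-modules, exact by construction, with $G \cong \coker(\partial^X_1)$. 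Its $R$-dual is, up to the canonical biduality on the individual free modules, the analogous splice $\cdots \to Q_1 \to Q_0 \to \Hom_R(P_0, R) \to \Hom_R(P_1, R) \to \cdots$ built for $\Hom_R(G, R)$; this is exact for the same reason, using now $\ext^j_R(G, R) = 0$ together with the biduality identification of $\Hom_R(\Hom_R(G,R), R)$ with $G$ at the splice point.

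The main obstacle is verifying that the splicings in the reverse direction really do produce exact complexes at the splice points for both $X$ and $\Hom_R(X,R)$: this reduces to checking naturality of the biduality identification used to glue and its compatibility with the differentials coming from the two resolutions. Once that bookkeeping is settled, all three conditions interact cleanly and the totally acyclic complex emerges.
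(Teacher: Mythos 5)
The paper does not supply a proof of this fact; it simply refers the reader to Christensen's \emph{Gorenstein Dimensions}~(4.1.4), so there is no in-paper argument to compare against.

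Your argument is correct and is essentially the standard one (and the one in the cited reference). For $(\Rightarrow)$, the key observation that $\Hom_R(X,R)$ is again totally acyclic, so that all syzygy modules of both $X$ and $\Hom_R(X,R)$ satisfy $\ext^{\geq 1}_R(-,R)=0$, is exactly what makes both condition~(3) and the exactness of the bottom row in your biduality diagram go through; you invoke it but could have flagged more explicitly that $\ext^1_R(M,R)=0$ for the relevant cokernel $M$ of $\Hom_R(\partial_1^X,R)$ is needed for the five-lemma step. For $(\Leftarrow)$, the splice $\cdots\to P_1\to P_0\to\Hom_R(Q_0,R)\to\Hom_R(Q_1,R)\to\cdots$ with connecting map $p\mapsto\bigl(q\mapsto \pi_Q(q)(\pi_P(p))\bigr)$ is the right construction, and the ``bookkeeping'' you defer is genuine but routine: dualizing that connecting map and precomposing with $\delta^R_{Q_0}$ gives precisely $q\mapsto\pi_Q(q)\circ\pi_P$, which is the connecting map of the analogous splice for $\Hom_R(G,R)$; exactness of that dual splice then uses $\ext^{\geq 1}_R(G,R)=0$ together with the identity $\Hom_R(\delta^R_G,R)\circ\delta^R_{\Hom_R(G,R)}=\operatorname{id}$, which shows $\delta^R_{\Hom_R(G,R)}$ is an isomorphism given that $\delta^R_G$ is. So the proof is sound; the only thing I would ask you to do is actually carry out that final compatibility check rather than leaving it as an ``obstacle,'' since it is where condition~(2) earns its keep.
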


\begin{defn} \label{d0202'}
Let $M$ be a finitely generated $R$-module. Then $M$ has
\emph{finite G-dimension} if it has a finite resolution by totally reflexive
$R$-modules, that is, if there is an exact sequence
$$0\to G_n\to\cdots\to G_1\to G_0\to M\to 0$$
such that each $G_i$ is a totally reflexive $R$-module.
The \emph{G-dimension} of $M$, when it is finite, is the length of the shortest
finite resolution by totally reflexive
$R$-modules:
$$
\gdim_R(M)
=\inf\left\{n\geq 0\left|
\text{\begin{tabular}{c}
there is an exact sequence of $R$-modules \\
$0\to G_n\to\cdots\to  G_0\to M\to 0$ \\
such that each $G_i$ is totally reflexive \end{tabular}}
\right.\right\}.$$
\end{defn}

\begin{fact} \label{f0204}
The ring $R$ is Gorenstein if and only if every finitely generated
$R$-module has finite G-dimension; see~\cite[(1.4.9)]{christensen:gd}.
Also, the
AB formula~\cite[(1.4.8)]{christensen:gd} says that
if $M$ is a finitely generated $R$-module of finite G-dimension, then
$$\gdim_R(M)=\depth(R)-\depth_R(M).$$
\end{fact}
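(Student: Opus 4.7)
The plan is to establish the two assertions in sequence---the AB formula first, then use it to derive the Gorenstein characterization.

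For the AB formula I would induct on $n = \gdim_R(M)$. The base case $n = 0$ requires showing $\depth_R(M) = \depth(R)$ for every nonzero totally reflexive $M$. This is the technical heart of the formula: because $M$ sits in a complete resolution, it is simultaneously a submodule of a free module (with totally reflexive cokernel) and a quotient of a free module (with totally reflexive kernel), and iterated depth-lemma arguments along both tails of the complete resolution, combined with the hypercohomology spectral sequence arising from the derived isomorphism $M \simeq \rhom_R(\Hom_R(M, R), R)$, pin $\depth_R(M)$ to $\depth(R)$. For the inductive step $n \geq 1$, fix a short exact sequence $0 \to K \to G \to M \to 0$ with $G$ finitely generated free. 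The resolving property of totally reflexive modules (via the horseshoe lemma and Ext-vanishing) gives $\gdim_R(K) = n - 1$, so by induction $\depth_R(K) = \depth(R) - n + 1$. The depth lemma applied to the sequence, together with the nonvanishing of $\ext^n_R(M, R)$ (which follows from the characterization $\gdim_R(M) = \sup\{i : \ext^i_R(M, R) \neq 0\}$ for modules of finite G-dimension), then forces $\depth_R(M) = \depth(R) - n$.

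With the AB formula established, the reverse implication $(\Leftarrow)$ of the characterization is immediate. If every finitely generated $R$-module has finite G-dimension, then $\gdim_R(k)$ is finite, and the AB formula (with $\depth_R(k) = 0$) yields $\gdim_R(k) = \depth(R)$. Dimension-shifting through any G-resolution of $k$ of length $\depth(R)$, combined with the vanishing $\ext^j_R(G, R) = 0$ for every totally reflexive $G$ and $j \geq 1$, gives $\ext^i_R(k, R) = 0$ for all $i > \depth(R)$. Hence $\injdim_R(R) \leq \depth(R) < \infty$, so $R$ is Gorenstein.

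For the forward implication $(\Rightarrow)$, assume $R$ is Gorenstein with $d = \dim R$, and let $M$ be any finitely generated $R$-module. Take the $d$-th syzygy $N$ in a free resolution of $M$; iterated application of the depth lemma shows $\depth_R(N) = d$, so $N$ is maximal Cohen--Macaulay. I would then verify the three conditions of Fact~\ref{f0299}: the vanishings $\ext^i_R(N, R) = 0 = \ext^i_R(\Hom_R(N, R), R)$ for $i \geq 1$ follow from local duality, once one knows $\Hom_R(N, R)$ is again maximal Cohen--Macaulay, using $\injdim_R(R) = d = \depth_R(N)$; and the biduality $\delta^R_N \colon N \to \Hom_R(\Hom_R(N, R), R)$ is an isomorphism because over a Gorenstein ring $R$ is its own dualizing module, so $\Hom_R(-, R)$ restricts to a perfect duality on the maximal Cohen--Macaulay modules (cf.\ Fact~\ref{f0203} with $D = R$). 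Hence $N$ is totally reflexive and $\gdim_R(M) \leq d$. The principal obstacle is the biduality isomorphism together with the depth computation in the base case of the AB formula: both hinge on duality theory, the former via $R$ serving as its own dualizing module and the latter via a careful spectral sequence or iterated depth-lemma argument. The remaining steps---dimension-shifting, depth-lemma bookkeeping, and the Ext characterization of $\gdim$---are essentially formal.
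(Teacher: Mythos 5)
The paper gives no argument for this Fact at all---it is imported wholesale from Christensen~\cite[(1.4.8) and (1.4.9)]{christensen:gd} (ultimately Auslander--Bridger)---so there is no internal proof to compare against; your sketch is in effect a reconstruction of the standard proofs behind the citation, and its overall architecture is sound. The reverse implication of the Gorenstein characterization ($\gdim_R(k)<\infty$ plus the AB formula gives $\ext^i_R(k,R)=0$ for $i>\depth(R)$, hence $\injdim_R(R)<\infty$) and the forward implication (a $\dim(R)$-th syzygy is maximal Cohen--Macaulay, and $\Hom_R(-,R)$ is a perfect duality on maximal Cohen--Macaulay modules over a Gorenstein ring, so that syzygy is totally reflexive) are both correct and standard.

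The one thin spot is the inductive step of the AB formula. From $0\to K\to G\to M\to 0$ with $\gdim_R(K)=n-1$, the depth lemma pins down $\depth_R(M)=\depth_R(K)-1$ only when $\depth_R(K)<\depth(G)=\depth(R)$, i.e.\ when $n\geq 2$; for $n=1$ it yields only $\depth_R(M)\geq\depth(R)-1$, and the appeal to $\ext^1_R(M,R)\neq 0$ does not by itself exclude $\depth_R(M)=\depth(R)$---ruling that out is essentially the statement being proved, and I see no short route to it from the depth lemma alone. The clean repair is already in your toolkit: the adjointness computation you invoke for the base case, $\rhom_R(k,M)\simeq\rhom_R(k\lotimes_R\rhom_R(M,R),R)$ together with the surviving corner term $\ext^{\depth R}_R(-,R)$ of the associated spectral sequence, applies verbatim with $\rhom_R(M,R)$ in place of $\Hom_R(M,R)$ and proves in one stroke that $\depth_R(M)=\depth(R)-\sup\{i:\ext^i_R(M,R)\neq 0\}$ for every finitely generated $M$ of finite G-dimension. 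Combined with the characterization $\gdim_R(M)=\sup\{i:\ext^i_R(M,R)\neq 0\}$, which you already use to produce the nonvanishing of $\ext^n_R(M,R)$, this gives the AB formula with no induction at all, and the rest of your argument then goes through unchanged.
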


\begin{fact} \label{f0202}
Let $S$ be a Cohen--Macaulay local ring equipped with a
module-finite local ring homomorphism
$\tau\colon S\to R$ such that
$R$ is Cohen--Macaulay.
Then $\gdim_S(R)<\infty$
if and only if there exists an integer $g\geq 0$ such that
$\ext^i_S(R,S)=0$ for all $i\neq g$ and $\ext^g_S(R,S)$ is
a semidualizing $R$-module;
when these conditions hold, one has $g=\gdim_S(R)$.
See~\cite[(6.1)]{christensen:scatac}.

Assume that $S$ has a dualizing module $D$.
If $\gdim_S(R)<\infty$, then
$R\otimes_SD$ is a semidualizing $R$-module and
$\tor^S_i(R,D)=0$ for all $i\geq 1$;
see~\cite[(4.7),(5.1)]{christensen:scatac}.
\end{fact}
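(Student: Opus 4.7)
The plan is to prove the two directions separately, using Fact~\ref{f0202} as the engine for constructing semidualizing modules and an iterated trivial extension as the engine for constructing $Q$.

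For the direction $(\Leftarrow)$, set $R_j := Q/I_j$ for $j=1,2$ and aim to apply Fact~\ref{f0202} to the module-finite surjection $R_2 \onto Q/(I_1+I_2) = R$. The pivotal sub-step is to show $\gdim_{R_2}(R) < \infty$. Since $Q$ is Gorenstein, Fact~\ref{f0204} gives that $R_1$ has finite G-dimension over $Q$, so a complete resolution of $R_1$ over $Q$ is available in the Avramov--Martsinkovsky sense. Ordinary Tor-vanishing~\eqref{thm0001c} lets us tensor a free $Q$-resolution of $R_1$ with $R_2$ to obtain a free $R_2$-resolution of $R = R_1 \otimes_Q R_2$, while the Tate vanishing~\eqref{thm0001d} ensures that a sufficiently high $R_2$-syzygy of $R$ is totally reflexive over $R_2$. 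Fact~\ref{f0202} then produces an integer $g \geq 0$ and a semidualizing $R$-module $C := \ext^g_{R_2}(R, R_2)$. A routine change-of-rings identifies $C \cong \ext^g_Q(R_1, R_2)$, so condition~\eqref{thm0001e} gives $C \not\cong R$ (any cyclic semidualizing $R/J$ satisfies $R \cong \hom_R(R/J, R/J) = R/J$, forcing $J = 0$). To force $C \not\cong D$, I would use condition~\eqref{thm0001b}: Fact~\ref{f0205} computes the dualizing $R$-module as $\ext^g_{R_2}(R, D_{R_2})$ for the dualizing $R_2$-module $D_{R_2} \not\cong R_2$, so an isomorphism $C \cong D$ would contradict uniqueness of the dualizing module from Fact~\ref{f0206}.

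For the direction $(\Rightarrow)$, given a non-trivial semidualizing $R$-module $C$, I construct $Q$ as a double trivial extension in the spirit of Fact~\ref{f0209}. Set $R_1 := R \ltimes C$; this is Cohen--Macaulay by Fact~\ref{f0208} but not Gorenstein, since $C$ is not dualizing. Its dualizing module is $D_{R_1} \cong D \oplus \cd$, by applying Fact~\ref{f0205} to the finite map $R \to R_1$ together with the Ext-vanishings in Fact~\ref{f0203}. Set $Q := R_1 \ltimes D_{R_1}$; by Fact~\ref{f0209} this is a Gorenstein local ring of dimension $\dim R$. As an $R$-module, $Q \cong R \oplus C \oplus \cd \oplus D$ with multiplication governed by the perfect evaluation pairing $C \otimes_R \cd \xra{\cong} D$ from Fact~\ref{f0203}. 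Take $I_1 := \ker(Q \onto R_1) = 0 \oplus D_{R_1}$ and let $I_2$ be the corresponding graded ideal $0 \oplus C \oplus 0 \oplus D \subset Q$; direct verification using the pairing shows $I_2$ is an ideal, $Q/I_2 \cong R \ltimes \cd$, and $I_1 + I_2 = \ker(Q \onto R)$. Conditions~\eqref{thm0001a} and~\eqref{thm0001b} are then immediate from Fact~\ref{f0208} together with the failure of $C$ and $\cd$ to be dualizing (by non-triviality of $C$ and Fact~\ref{f0203}). Conditions~\eqref{thm0001c} and~\eqref{thm0001d} are verified by computing $\tor^Q_\bullet$ and Tate (co)homology using the layered structure of $Q$, with the key inputs being $\tor^R_i(C, \cd) = 0$ and $\ext^i_R(C, D) = 0 = \ext^i_R(\cd, D)$ for $i \geq 1$ from Fact~\ref{f0203}. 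Condition~\eqref{thm0001e} follows from the same change-of-rings identification as in the $(\Leftarrow)$ direction, combined with the non-freeness of $C$.

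The main obstacle is the Tate-vanishing computation in condition~\eqref{thm0001d} on the $(\Rightarrow)$ side: although the layered structure of $Q$ and the Tor/Ext vanishings in Fact~\ref{f0203} strongly suggest it, extracting full Tate (co)homology vanishing over $Q$ requires either an explicit complete resolution of $R_1$ or $R_2$ over $Q$, or a careful change-of-rings spectral sequence. In the $(\Leftarrow)$ direction the corresponding delicate point is rigorously transferring the Tate vanishing over $Q$ into finite G-dimension over $R_2$ and cleanly separating $C \not\cong R$ from $C \not\cong D$ using conditions~\eqref{thm0001e} and~\eqref{thm0001b} respectively.
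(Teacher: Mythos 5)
Your proposal does not address the statement it is meant to prove. The statement is Fact~\ref{f0202}: for a module-finite local homomorphism $\tau\colon S\to R$ between Cohen--Macaulay local rings, $\gdim_S(R)<\infty$ holds if and only if $\ext^i_S(R,S)$ vanishes for all $i$ except a single degree $g$, in which it is a semidualizing $R$-module (and then $g=\gdim_S(R)$); together with the assertion that $R\otimes_SD$ is a semidualizing $R$-module with $\tor^S_i(R,D)=0$ for $i\geq 1$ when $S$ has a dualizing module $D$ and $\gdim_S(R)<\infty$. What you have written is instead an outline of a proof of Theorem~\ref{thm0001}, the paper's main result. Indeed, your opening sentence announces that you will use ``Fact~\ref{f0202} as the engine for constructing semidualizing modules,'' so even if every step of your outline were filled in, it could not serve as a proof of Fact~\ref{f0202}: it presupposes that fact as an input, and the argument would be circular.

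Note also that the paper itself gives no proof of Fact~\ref{f0202}; it is quoted from Christensen~\cite{christensen:scatac}, items (4.7), (5.1) and (6.1). A genuine proof would have to show, for example, that finiteness of $\gdim_S(R)$ forces $\ext^i_S(R,S)$ to be concentrated in the single degree $g=\depth(S)-\depth(R)$ and that $\ext^g_S(R,S)$ satisfies the homothety-isomorphism and Ext-vanishing conditions of Definition~\ref{d0201} \emph{over $R$}, not merely over $S$, together with the converse implication and the Tor-vanishing $\tor^S_i(R,D)=0$. None of the ingredients for such an argument (the identification of $\rhom_S(R,S)$ as a semidualizing $R$-complex, a depth or local-duality computation pinning down $g$, or an Auslander-class argument for the base change $R\otimes_SD$) appears in your text. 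If your intended target was Theorem~\ref{thm0001}, your outline is broadly consistent with the paper's strategy --- iterated trivial extensions $R\ltimes C$ and $(R\ltimes C)\ltimes D_1$, transfer of Tate vanishing, and non-cyclicity of $\Hom$ modules --- but that is a different statement and would need to be evaluated on its own terms.
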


Our final background topic is Avramov and Martsinkovsky's notion of Tate cohomology~\cite{avramov:aratc}.

\begin{defn} \label{d0202''}
Let $M$ be a finitely generated $R$-module.
Considering $M$ as a complex concentrated in degree zero,
a \emph{Tate resolution} of $M$ is a diagram
of degree zero chain maps of $R$-complexes
$T\xra{\alpha}P\xra{\beta}M$
satisfying the following conditions:
\begin{enumerate}[\quad(1)]
\item
The complex $T$ is totally acyclic, and the map $\alpha_i$
is an isomorphism for $i\gg 0$;
\item
The complex $P$ is a resolution of $M$ by finitely generated free $R$-modules, and $\beta$ is the augmentation map
\end{enumerate}
\end{defn}

\begin{disc}
In~\cite{avramov:aratc}, Tate resolutions are called ``complete resolutions''.
We call them Tate resolutions in order
to avoid confusion with the terminology from Definition~\ref{d0202}.
This is consistent with~\cite{sather:tate1}.
\end{disc}

\begin{fact} \label{f0298}
By~\cite[(3.1)]{avramov:aratc}, a finitely generated $R$-module
$M$ has finite G-dimension if and only if it admits a Tate resolution.
\end{fact}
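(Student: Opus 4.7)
The plan is to prove both implications separately. The reverse direction is a short exactness argument inside the totally acyclic complex, while the forward direction requires a splicing construction that I expect to be the main technical step. Throughout, I use two elementary facts: free $R$-modules are totally reflexive, and any shift of a totally acyclic complex remains totally acyclic (since the hypotheses in Definition~\ref{d0202} are invariant under reindexing).

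For the reverse direction, suppose $T\xra{\alpha} P\xra{\beta} M$ is a Tate resolution and fix $N$ with $\alpha_i$ an isomorphism for all $i\geq N$. A short diagram chase, using that $T$ is exact at $T_{N-1}$ and $P$ is exact at $P_{N-1}$ together with the invertibility of $\alpha_N$ and $\alpha_{N+1}$, shows that $\alpha_{N-1}$ restricts to an isomorphism $\ker(\partial^T_{N-1})\xra{\cong}\ker(\partial^P_{N-1})$. By exactness of $T$, the source of this map equals $\coker(\partial^T_{N+1}\colon T_{N+1}\to T_N)$, and is therefore totally reflexive by applying Definition~\ref{d0202} to the shifted totally acyclic complex $T'$ with $T'_i=T_{i+N}$. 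Splicing with the truncation of $P$ then yields the finite exact sequence
$$0\to\ker(\partial^P_{N-1})\to P_{N-1}\to\cdots\to P_0\to M\to 0$$
consisting of totally reflexive modules, so $\gdim_R(M)\leq N<\infty$.

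For the forward direction, suppose $\gdim_R(M)=g<\infty$ and let $P\to M$ be a free resolution. The first key step is to show that the syzygy $K:=\ker(\partial^P_{g-1})$ is totally reflexive. Fix a finite totally reflexive resolution $0\to G_g\to\cdots\to G_0\to M\to 0$, apply the comparison theorem to lift the identity on $M$ to a chain map from $P$ to this resolution, and iterate Schanuel's lemma along the two resolutions to obtain an isomorphism $K\oplus A\cong G_g\oplus B$ for totally reflexive modules $A,B$ built from the terms of the two resolutions. Since the totally reflexive class is closed under direct sums and direct summands, $K$ is totally reflexive.

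Given this, let $T'$ be a complete resolution of $K$. Both $T'_{\geq 0}$ and the truncation $P_{\geq g}$ (augmented by the surjection $P_g\to K$) are free resolutions of $K$, so the comparison theorem supplies a chain map between them lifting the identity on $K$. A standard splicing then produces a totally acyclic complex $T$ whose degrees $\geq g$ coincide with $P_{\geq g}$ and whose negative degrees come from the lower part of $T'$, together with a chain map $\alpha\colon T\to P$ which is literally the identity in degrees $\geq g$; paired with the augmentation $\beta\colon P\to M$, this is the desired Tate resolution. The main obstacle is verifying that this spliced $T$ inherits total acyclicity, which reduces to checking exactness of both $T$ and $\hom_R(T,R)$ at the single splicing degree: the former follows from exactness of $P_{\geq g}\to K\to 0$, and the latter uses precisely that $K$ is totally reflexive, ensuring $\hom_R(-,R)$ behaves correctly across the splice.
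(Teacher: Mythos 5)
The paper simply cites Avramov--Martsinkovsky for this fact, so you are supplying a proof it does not contain. Your reverse direction and the splicing construction in the forward direction are correct, and the overall plan (produce a totally reflexive $g$th syzygy, then splice its complete resolution onto the free resolution) is exactly the standard one. However, there is a genuine gap at the point where you argue that $K=\ker(\partial^P_{g-1})$ is totally reflexive. You propose to iterate Schanuel's lemma between the free resolution $P$ and the totally reflexive resolution $G_\bullet$, but Schanuel's lemma, in both its one-step and long forms, requires \emph{both} resolutions to consist of projective modules. In the one-step proof one forms the pullback $X$ of $P_0\twoheadrightarrow M\twoheadleftarrow G_0$; the induced surjection $X\to P_0$ splits because $P_0$ is free, but the surjection $X\to G_0$ splits only when $\ext^1_R\bigl(G_0,\ker(P_0\to M)\bigr)=0$, and a totally reflexive $G_0$ is only guaranteed to satisfy $\ext^1_R(G_0,R)=0$, not $\ext^1_R(G_0,N)=0$ for an arbitrary module $N$. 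So the claimed isomorphism $K\oplus A\cong G_g\oplus B$ is not produced by this argument.

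The conclusion you want is true (this is Auslander--Bridger's refinement theorem; see, e.g., \cite[(1.2.7)]{christensen:gd}), but it needs a different mechanism. One direct route that stays close to your setup: take a chain map $\phi\colon P\to G_\bullet$ over $\id_M$; since $\phi$ is a quasi-isomorphism, its mapping cone is an exact complex
\[
\cdots\to P_{g+1}\to P_g\to P_{g-1}\oplus G_g\to\cdots\to P_0\oplus G_1\to G_0\to 0,
\]
and the image of $P_g$ in $P_{g-1}\oplus G_g$ is isomorphic to $K$, yielding a finite exact sequence $0\to K\to P_{g-1}\oplus G_g\to\cdots\to P_0\oplus G_1\to G_0\to 0$ in which every nonzero term other than $K$ is totally reflexive. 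Applying repeatedly the fact (a direct consequence of Fact~\ref{f0299} and the long exact sequence in $\ext_R(-,R)$) that in a short exact sequence with totally reflexive middle and right-hand terms the left-hand term is also totally reflexive, one concludes that $K$ is totally reflexive. With that repair, the rest of your argument goes through.
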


\begin{defn} \label{d0202'''}
Let $M$ be a finitely generated $R$-module of finite G-dimension,
and let
$T\xra{\alpha}P\xra{\beta}M$ be a Tate resolution of $M$.
For each integer $i$ and each $R$-module $N$, the
$i$th \emph{Tate homology} and
\emph{Tate cohomology} modules are
\begin{align*}
\tatetor^R_i(M,N)
&=\HH_i(T\otimes_RN)
&\tateext_R^i(M,N)
&=\HH_{-i}(\Hom_R(T,N)).
\end{align*}
\end{defn}

\begin{fact} \label{f0297}
Let $M$ be a finitely generated $R$-module of finite G-dimension.
For each integer $i$ and each $R$-module $N$, the
modules $\tatetor^R_i(M,N)$
and $\tateext_R^i(M,N)$
are independent of the choice of
Tate resolution of $M$,
and they are appropriately functorial in each variable by~\cite[(5.1)]{avramov:aratc}.
If $M$ has finite projective dimension, then
we have
$\tatetor^R_i(M,-)=0=\tateext_R^i(M,-)$
and
$\tatetor^R_i(-,M)=0=\tateext_R^i(-,M)$
for each integer $i$; see~\cite[(5.9) and (7.4)]{avramov:aratc}.
\end{fact}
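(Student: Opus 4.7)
The plan is to establish the three assertions in order: (a) independence of the Tate resolution, (b) functoriality in each variable, and (c) vanishing when $M$ has finite projective dimension. For (a), the key is a comparison lemma: given two Tate resolutions $T\xra{\alpha}P\xra{\beta}M$ and $T'\xra{\alpha'}P'\xra{\beta'}M$, there is a chain map $\widetilde\alpha\colon T\to T'$, unique up to chain homotopy, compatible with the chain map $\widetilde\gamma\colon P\to P'$ produced by the standard comparison theorem for free resolutions over the identity on $M$. In degrees $i\gg 0$ where $\alpha_i$ and $\alpha'_i$ are isomorphisms, one is forced to take $\widetilde\alpha_i=(\alpha'_i)^{-1}\widetilde\gamma_i\alpha_i$; one then extends $\widetilde\alpha$ downward degree by degree, using that each $T'_i$ is finitely generated free and that both $T'$ and $\Hom_R(T',R)$ are acyclic, so the usual lifting obstructions vanish in each degree. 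Applying $-\otimes_RN$ and $\Hom_R(-,N)$ to $\widetilde\alpha$ produces canonical isomorphisms of $\tatetor$ and $\tateext$, independent of the chosen Tate resolution.

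Claim (b) is then largely formal. Functoriality in $M$ follows from the same lifting argument applied to an arbitrary morphism $M\to M'$ rather than to the identity; the uniqueness up to homotopy ensures that the induced maps on Tate homology and cohomology are well defined and composable. Functoriality in $N$ is immediate since $-\otimes_R(-)$ and $\Hom_R(-,-)$ are bifunctors and passage to homology is functorial.

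For (c), suppose $M$ has finite projective dimension. For the first-variable vanishing, choose any finite free resolution $0\to P_n\to\cdots\to P_0\to M\to 0$ and take $T=0$ and $\alpha=0$; conditions~(1) and~(2) of Definition~\ref{d0202''} hold trivially, and $T\otimes_RN=0=\Hom_R(T,N)$ yields $\tatetor^R_i(M,N)=0=\tateext_R^i(M,N)$ for every $i$ and every $N$. For the second-variable vanishing, fix $L$ of finite G-dimension with Tate resolution $T\xra{\alpha}P\xra{\beta}L$, so that $\tatetor^R_i(L,M)=\HH_i(T\otimes_RM)$ and $\tateext_R^i(L,M)=\HH_{-i}(\Hom_R(T,M))$. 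Let $F_\bullet\to M$ be a bounded free resolution; since each $T_j$ is free, tensoring and homming with $T$ preserve quasi-isomorphisms, whence $T\otimes_RM\simeq T\otimes_RF_\bullet$ and $\Hom_R(T,M)\simeq\Hom_R(T,F_\bullet)$ after totalization. The resulting double complexes are built from shifted direct sums of the acyclic complexes $T$ and $\Hom_R(T,R)$; since $F_\bullet$ is bounded, the associated spectral sequences converge and force both total complexes to be acyclic, giving the desired vanishing.

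The main obstacle is the comparison lemma in (a): a Tate resolution splices a totally acyclic complex to a bounded-below free resolution via $\alpha$, and constructing a comparison map between two such splices requires careful bookkeeping in the junction range where $\alpha_i$ first becomes an isomorphism, as well as a coherent homotopy classification that matches the homotopy of $\widetilde\gamma$ in low degree with the forced formula in high degree. Once this lemma is in hand, (b) and (c) follow with relatively little additional effort.
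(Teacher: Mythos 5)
This item appears in the paper as a \emph{Fact} with no proof: the authors simply cite Avramov--Martsinkovsky (5.1) for well-definedness and functoriality and (5.9), (7.4) for the vanishing, and your sketch is essentially a reconstruction of those arguments rather than a genuinely different route. The outline is sound, but two points deserve precision. First, in the downward extension of $\widetilde\alpha$ the obstruction at stage $i$ is the class of the induced map $\im\partial^T_i\to T'_{i-1}$ in $\ext^1_R(\coker\partial^T_i,T'_{i-1})$; this vanishes because $\coker\partial^T_i$ is totally reflexive --- that is, one uses exactness of $\Hom_R(T,R)$, the dual of the \emph{source} complex --- together with the freeness of the finitely generated module $T'_{i-1}$. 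Your attribution of the vanishing to acyclicity of $T'$ and $\Hom_R(T',R)$ points at the wrong complex; it is harmless here since both resolutions are totally acyclic, but the distinction matters when the same comparison lemma is invoked for functoriality in the first variable. Second, the blanket claim that tensoring or homming with $T$ preserves quasi-isomorphisms is false for an unbounded complex of projectives; what saves the argument is that the cone of $F_\bullet\to M$ is a \emph{bounded} exact complex, so the boundedness/spectral-sequence argument you give immediately afterwards is the actual justification. With those caveats, part (c) is correct: $T=0$ together with a finite resolution by finitely generated free modules is a legitimate Tate resolution when $\pd_R(M)<\infty$, and the second-variable vanishing follows because the columns $T\otimes_RF_p$ and $\Hom_R(T,F_p)$ are shifted finite direct sums of the acyclic complexes $T$ and $\Hom_R(T,R)$ and there are only finitely many of them.
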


\section{Proof of \protect{Theorem~\ref{thm0001}}} \label{sec02}

We divide the proof of Theorem~\ref{thm0001} into two pieces.
The first piece is the following result
which covers one implication.
Note that, if $\pd_Q(Q/I_1)$ or $\pd_Q(Q/I_2)$ is finite,
then condition~\eqref{thm0302d} holds automatically by
Fact~\ref{f0297}.

\begin{thm}[Sufficiency of conditions~\eqref{thm0001a}--\eqref{thm0001c} of Theorem~\ref{thm0001}]
\label{thm0302}
Let $R$ be a local Cohen--Macaulay ring with dualizing module.
Assume that there exist
a Gorenstein local ring $Q$ and
ideals $I_1,I_2\subset Q$ satisfying the following conditions:
\begin{enumerate}[\quad\rm(1)]
\item \label{thm0302a}
There is a ring isomorphism
$R\cong Q/(I_1+I_2)$;
\item \label{thm0302b}
For $j=1,2$ the quotient ring $Q/I_j$ is Cohen--Macaulay,
and $Q/I_2$ is not Gorenstein;
\item \label{thm0302d}
For all $i\in\bbz$, we have
$\tatetor^Q_i(Q/I_1,Q/I_2)=0=\tateext^i_Q(Q/I_1,Q/I_2)$;
\item \label{thm0302e}
There exists an integer $c$ such that $\ext^c_Q(Q/I_1,Q/I_2)$ is not cyclic; and
\item \label{thm0302c}
For all $i\geq 1$, we have $\tor^Q_i(Q/I_1,Q/I_2)=0$; in particular,
there is an equality $I_1\cap I_2=I_1I_2$.
\end{enumerate}
Then  $R$ admits a semidualizing module that is neither dualizing nor free.
\end{thm}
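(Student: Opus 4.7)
The plan is to apply Fact~\ref{f0202} to construct a semidualizing $R$-module. Set $A_j := Q/I_j$, so $R \cong A_2/I_1 A_2$. Since $A_2$ is Cohen--Macaulay and a homomorphic image of the Gorenstein ring $Q$, Fact~\ref{f0205} gives a dualizing module $D_2 = \ext^{g_2}_Q(A_2, Q)$ for $A_2$, where $g_2 = \gdim_Q(A_2)$. If we can verify that $\gdim_{A_2}(R) < \infty$, then both parts of Fact~\ref{f0202}, applied to the module-finite surjection $A_2 \twoheadrightarrow R$, produce two semidualizing $R$-modules at once: $B := \ext^g_{A_2}(R, A_2)$ and $C := R \otimes_{A_2} D_2$, where $g := \gdim_{A_2}(R)$. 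Verifying $\gdim_{A_2}(R) < \infty$ is the main obstacle; the rest of the proof is a comparison of the hypotheses with standard properties of semidualizing modules.

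To verify $\gdim_{A_2}(R) < \infty$, I would construct a Tate resolution of $R$ over $A_2$ by base change. Since $Q$ is Gorenstein, $A_1$ has finite G-dimension over $Q$ by Fact~\ref{f0204}, so it admits a Tate resolution $T \xra{\alpha} P \xra{\beta} A_1$ over $Q$ by Fact~\ref{f0298}. Apply $- \otimes_Q A_2$. The vanishing $\tor^Q_i(A_1, A_2) = 0$ for $i \geq 1$ makes $P \otimes_Q A_2 \to R$ a free $A_2$-resolution of $R$; the vanishing $\tatetor^Q_i(A_1, A_2) = 0$ says precisely that $T \otimes_Q A_2$ is acyclic; and the vanishing $\tateext^i_Q(A_1, A_2) = 0$, together with the adjunction $\Hom_{A_2}(T \otimes_Q A_2, A_2) \cong \Hom_Q(T, A_2)$, says that $\Hom_{A_2}(T \otimes_Q A_2, A_2)$ is acyclic. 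Thus $T \otimes_Q A_2$ is a totally acyclic complex of finitely generated free $A_2$-modules, and the composition $T \otimes_Q A_2 \to P \otimes_Q A_2 \to R$ is a Tate resolution of $R$ over $A_2$; hence $\gdim_{A_2}(R) < \infty$ by Fact~\ref{f0298}.

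With $B$ and $C$ now semidualizing $R$-modules, a base-change adjunction (using $\tor^Q_i(A_1, A_2) = 0$ for $i \geq 1$ to collapse $A_1 \otimes^{\mathbf{L}}_Q A_2$ to $R$) gives $\ext^i_Q(A_1, A_2) \cong \ext^i_{A_2}(R, A_2)$ for all $i$. By Fact~\ref{f0202} the right-hand side vanishes for $i \neq g$ and equals $B$ at $i = g$, so the integer $c$ in hypothesis~\eqref{thm0302e} is forced to equal $g$ and $B$ is non-cyclic.

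Finally, I would verify that $C$ is neither free nor dualizing. Since $\Hom_R(R^n, R^n) = M_n(R)$ is isomorphic to $R$ only for $n = 1$, every free semidualizing $R$-module has rank one; hence freeness of $C$ would give $C \cong R$, so $D_2 / I_1 D_2$ would be cyclic as an $A_2$-module. Nakayama's lemma (noting $I_1 A_2 \subseteq \m_{A_2}$) would then make $D_2$ itself cyclic, and faithfulness of dualizing modules would force $D_2 \cong A_2$, making $A_2$ Gorenstein and contradicting hypothesis~\eqref{thm0302b}. For the non-dualizing claim, a short derived-category computation using $C \simeq R \otimes^{\mathbf{L}}_{A_2} D_2$, the expression for $D_R$ arising from Fact~\ref{f0205} applied to $A_2 \to R$, and the canonical isomorphism $\rhom_{A_2}(D_2, D_2) \simeq A_2$ yields $\Hom_R(C, D_R) \cong B$. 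Consequently $C$ is dualizing iff $B \cong R$ iff $B$ is cyclic --- which the previous paragraph has ruled out. Hence $C$ is a semidualizing $R$-module that is neither free nor dualizing, as required.
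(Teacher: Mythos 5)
Your proof is correct, and its crucial step is the same as the paper's: you base-change a Tate resolution of $Q/I_1$ over $Q$ along $Q\to Q/I_2$, and the three vanishing hypotheses (ordinary Tor, Tate Tor, Tate Ext) are precisely what is needed to conclude that the result is a Tate resolution of $R$ over $R_2 := Q/I_2$, so that $\gdim_{R_2}(R)<\infty$. Where you diverge is in the endgame. The paper works directly with $\ext^g_{R_2}(R,R_2)$ (your $B$): it is not free because hypothesis~\eqref{thm0302e}, transported through the identification $\ext^i_Q(R_1,R_2)\cong\ext^i_{R_2}(R,R_2)$, makes it non-cyclic; and it is not dualizing because the Bass series identity $I^{R_2}_{R_2}(t)=t^eI^B_R(t)$ together with $R_2$ not Gorenstein (hypothesis~\eqref{thm0302b}) shows $I^B_R(t)$ is not a monomial. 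You instead exhibit the $D_R$-dual module $C:=R\otimes_{R_2}D_2\cong\Hom_R(B,D_R)$ as the witness, ruling out freeness via Nakayama and the faithfulness of $D_2$ (again hypothesis~\eqref{thm0302b}) and ruling out dualizing via $\Hom_R(C,D_R)\cong B$ being non-cyclic (hypothesis~\eqref{thm0302e}). In effect you have swapped which hypothesis handles ``not free'' and which handles ``not dualizing,'' by passing to the dual semidualizing module. Your version avoids the appeal to Bass series and to the derived-category formula from~\cite[(1.7.8)]{christensen:scatac}, at the cost of introducing $D_2$ and the extra module $C$; both routes are sound, and either of $B$ or $C$ serves as the required non-trivial semidualizing module since each is the $D_R$-dual of the other.
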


\begin{proof}
For $j=1,2$ set $R_j=Q/I_j$.
Since $Q$ is Gorenstein, we have
$\gdim_Q(R_1)<\infty$ by Fact~\ref{f0204}, so $R_1$ admits a Tate resolution
$T\xra\alpha P\xra\beta R_1$
over $Q$; see Fact~\ref{f0298}.

We claim that the induced diagram
$T\otimes_QR_2\xra{\alpha\otimes_QR_2} P\otimes_QR_2\xra{\beta\otimes_QR_2} R_1\otimes_QR_2$
is a Tate resolution of $R_1\otimes_QR_2\cong R$ over $R_2$.
The condition~\eqref{thm0302c} implies that $P\otimes_QR_2$ is a free resolution
of $R_1\otimes_QR_2\cong R$ over $R_2$, and it follows that
$\beta\otimes_QR_2$ is a quasi-isormorphism.
Of course, the complex $T\otimes_QR_2$ consists of finitely generated free $R_2$-modules,
and the map $\alpha^i\otimes_QR_2$ is an isomorphism for $i\gg 0$.
The condition $\tatetor^Q_i(R_1,R_2)=0$ from~\eqref{thm0302d}
implies that the complex $T\otimes_QR_2$ is exact. Hence, to prove the claim,
it remains to show that the first complex in the following sequence of isomorphisms is exact:
\begin{align*}
\Hom_{R_2}(T\otimes_QR_2,R_2)
&\cong\Hom_Q(T,\Hom_{R_2}(R_2,R_2))
\cong\Hom_Q(T,R_2).
\end{align*}
The isomorphisms here are given by Hom-tensor adjointness and
Hom cancellation. This explains the first step in the next sequence of isomorphisms:
$$\HH_i(\Hom_{R_2}(T\otimes_QR_2,R_2))
\cong\HH_i(\Hom_Q(T,R_2))
\cong\tateext^{-i}_Q(R_1,R_2)=0.
$$
The second step is by definition, and the third step is by assumption~\eqref{thm0302d}.
This establishes the claim.

From the claim, we conclude that $g=\gdim_{R_2}(R)$ is finite; see Fact~\ref{f0298}.
It follows from Fact~\ref{f0202} that $\ext^g_{R_2}(R,R_2)\neq 0$,
and that the $R$-module $C=\ext^g_{R_2}(R,R_2)$ is semidualizing.

To complete the proof, we need only show that $C$ is not free and not dualizing.
By assumption~\eqref{thm0302e}, the fact that $\ext^i_{R_2}(R,R_2)=0$
for all $i\neq g$ implies that $C=\ext^g_{R_2}(R,R_2)$ is not cyclic,
so $C\not\cong R$.

There is an equality of Bass series
$I^{R_2}_{R_2}(t)=t^e I^C_R(t)$ for some integer $e$.
(For instance, the vanishing $\ext^i_{R_2}(R,R_2)=0$
for all $i\neq g$ implies that there is an isomorphism
$C\simeq\shift^g\rhom_{R_2}(R,R_2)$ in $\catd(R)$,
so we can apply, e.g., \cite[(1.7.8)]{christensen:scatac}.)
By assumption~\eqref{thm0302b}, the ring $R_2$ is not Gorenstein.
Hence, the Bass series $I^{R_2}_{R_2}(t)=t^e I^C_R(t)$
is not a monomial. It follows that
the Bass series $I^C_R(t)$
is not a monomial, so
$C$ is not dualizing for $R$.
\end{proof}

The remainder of this section is devoted to the proof of the following.

\begin{thm}[Necessity of conditions~\eqref{thm0001a}--\eqref{thm0001c} of Theorem~\ref{thm0001}]
 \label{thm0303}
Let $R$ be a local Cohen--Macaulay ring with dualizing module $D$.
Assume that  $R$ admits a semidualizing module $C$ that is neither dualizing nor free.
Then there exist
a Gorenstein local ring $Q$ and
ideals $I_1,I_2\subset Q$ satisfying the following conditions:
\begin{enumerate}[\quad\rm(1)]
\item \label{thm0303a}
There is a ring isomorphism
$R\cong Q/(I_1+I_2)$;
\item \label{thm0303b}
For $j=1,2$ the quotient ring $Q/I_j$ is Cohen--Macaulay
with a dualizing module $D_j$ and is not Gorenstein;
\item \label{thm0303d}
For all $i\in\bbz$, we have
$\tatetor^Q_i(Q/I_1,Q/I_2)=0=\tateext^i_Q(Q/I_1,Q/I_2)$
and
$\tatetor^Q_i(Q/I_2,Q/I_1)=0=\tateext^i_Q(Q/I_2,Q/I_1)$;
\item \label{thm0303e}
The modules $\Hom_Q(Q/I_1,Q/I_2)$ and $\Hom_Q(Q/I_2,Q/I_1)$ are not cyclic;
\item \label{thm0303c}
For all $i\geq 1$, we have $\ext^i_Q(Q/I_1,Q/I_2)=0=\ext^i_Q(Q/I_2,Q/I_1)$
and $\tor^Q_i(Q/I_1,Q/I_2)=0$; in particular,
there is an equality $I_1\cap I_2=I_1I_2$;
\item \label{thm0303x}
For $j=1,2$ we have $\gdim_{Q/I_j}(R)<\infty$; and
\item \label{thm0303y}
There exists an $R$-module isomorphism $D_1\otimes_QD_2\cong D$,
and for all $i\geq 1$ we have $\tor^Q_i(D_1,D_2)=0$.
\end{enumerate}
\end{thm}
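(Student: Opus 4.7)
The plan is to construct $Q$ as a twisted triple trivial extension of $R$. Set $R_1 := R\ltimes C^\dagger$ and $R_2 := R\ltimes C$; by Facts~\ref{f0206} and~\ref{f0208}, both are Cohen--Macaulay local rings with $\dim R_j = \dim R$. Fact~\ref{f0205} supplies dualizing modules $D_1 = \Hom_R(R_1,D)$ and $D_2 = \Hom_R(R_2,D)$, identified using biduality and Fact~\ref{f0203} with $D\oplus C$ and $D\oplus C^\dagger$ respectively as $R$-modules. Now let $Q := R_1\ltimes D_1$, which is Gorenstein by Fact~\ref{f0209}, with $\dim Q = \dim R$. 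Unwinding the nested trivial extensions, $Q$ has underlying group $R \oplus C^\dagger \oplus D \oplus C$ and multiplication
\[
(r,\alpha,\delta,\gamma)(r',\alpha',\delta',\gamma') = (rr',\, r\alpha'+r'\alpha,\, r\delta'+r'\delta+\alpha(\gamma')+\alpha'(\gamma),\, r\gamma'+r'\gamma),
\]
where $\alpha(\gamma)\in D$ uses the evaluation isomorphism $C^\dagger\otimes_R C\cong D$ of Fact~\ref{f0203}. This formula is visibly symmetric in $C^\dagger$ and $C$, so $Q \cong R_2\ltimes D_2$ as rings. Let $I_1,I_2$ be the kernels of the canonical surjections $Q\twoheadrightarrow R_1$ and $Q\twoheadrightarrow R_2$; then $I_j\cong D_j$ as $Q$-modules, $Q/(I_1+I_2)\cong R$, giving~(1). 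Because a trivial extension $R\ltimes M$ is Gorenstein iff $M$ is dualizing for $R$, $R_1$ Gorenstein would force $C^\dagger\cong D$ hence $C\cong R$, and $R_2$ Gorenstein would force $C\cong D$; both are excluded, giving~(2).

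A direct computation from the multiplication formula yields $I_1 I_2 = I_1\cap I_2 = D$ (the $D$-summand of $Q$), whence $\tor^Q_1(R_1,R_2) = 0$. Identifying $\Hom_Q(R_1,R_2) = \{x\in R_2 : I_1\cdot x = 0\} \cong C$ and symmetrically $\Hom_Q(R_2,R_1)\cong C^\dagger$ gives~(4), since a cyclic semidualizing module is necessarily free by faithfulness, contradicting $C\not\cong R$ and $C^\dagger\not\cong R$. For the higher Ext and Tor vanishings in~(5) and the finite G-dimensions in~(6), I would apply Fact~\ref{f0202} to the module-finite surjection $R_j\to R$: by Fact~\ref{f0208}, $\depth R_j = \depth R = \depth_{R_j}(R)$, so by the AB formula (Fact~\ref{f0204}), if $\gdim_{R_j}(R)$ is finite it must be zero. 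To secure finiteness, I would exhibit the semidualizing module $R\otimes_{R_j}D_j$ together with the required Tor vanishings: a direct computation gives $R\otimes_{R_2}D_2 \cong D_2/CD_2\cong C^\dagger$ and $R\otimes_{R_1}D_1\cong C$, both semidualizing by Fact~\ref{f0203}, and the Tor vanishings reduce through iterated syzygies and the defining sequences $0\to C\to R_2\to R\to 0$ (and its analogue for $R_1$) to the hypothesis $\tor_i^R(C,C^\dagger)=0$ from Fact~\ref{f0203}. Thus $\gdim_{R_j}(R)=0$, proving~(6); total reflexivity of $R$ over each $R_j$ then yields $\ext^i_{R_j}(R,R_j)=0$ for $i\geq 1$. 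The change-of-rings spectral sequence
\[
E_2^{p,q} = \ext^p_{R_j}\bigl(\tor^Q_q(R_k,R_j),R_j\bigr) \Longrightarrow \ext^{p+q}_Q(R_k,R_j)
\]
collapses under the Tor vanishings to yield $\ext^i_Q(R_1,R_2)\cong\ext^i_{R_2}(R,R_2)=0$ and symmetrically, completing~(5). Condition~(7) falls out from $I_j\cdot D_j = 0$, giving $D_1\otimes_Q D_2 \cong D_1\otimes_{R_1}(D_2/I_1D_2)\cong C\otimes_R C^\dagger\cong D$, with the accompanying Tor vanishings by the same mechanism.

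For the Tate vanishings~(3), the key is that $R_1,R_2$ are themselves totally reflexive over $Q$: since $Q$ is Gorenstein of dimension $\dim R_j$, the AB formula forces $\gdim_Q(R_j)=0$, and a direct calculation identifies $\Hom_Q(R_j,Q)\cong D_j$ (the elements of $Q$ annihilated by $I_j$ are precisely those of $I_j$). Given a complete resolution $T\to R_1$ over $Q$, Avramov--Martsinkovsky's comparison identifies $\tateext^i_Q(R_1,R_2) = \ext^i_Q(R_1,R_2)$ for $i\geq 1$ (already vanishing by~(5)), and the self-duality of complete resolutions — using that $T^* := \Hom_Q(T,Q)$ is a complete resolution of $D_1$ — translates the negative-index Tate cohomology into values of $\tatetor^Q_\bullet(D_1,R_2)$ and $\tatetor^Q_\bullet(D_2,R_1)$, which vanish by the arguments above applied to $D_1,D_2$ in place of $R_1,R_2$. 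The main obstacle, I expect, is making this final duality bridge precise — reconciling the complete-resolution indexing with the $Q$-duality isomorphism $R_j^*\cong D_j$ to reduce negative-degree Tate cohomology over $Q$ to positive-degree Tor-vanishings — after which the symmetry between $R_1$ and $R_2$ closes the remaining cases.
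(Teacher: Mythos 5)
Your construction is exactly the paper's up to an immaterial relabeling ($R_1\leftrightarrow R_2$): $Q \cong R_1\ltimes D_1 \cong R_2\ltimes D_2$ with $R_j$ the trivial extensions of $R$ by $C^\dagger$ and $C$, $D_j = \Hom_R(R_j,D)$, and $I_j$ the kernels of the surjections onto $R_j$. Your proofs of (1), (4) and (7) follow the paper's reasoning, and for (2) you invoke Reiten's criterion that $R\ltimes M$ is Gorenstein if and only if $M$ is dualizing, whereas the paper gives a direct count of minimal generators; that is a legitimate alternative.

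There are three genuine gaps. For (6): the dualizing-module half of Fact~\ref{f0202} is stated as a one-way implication ($\gdim_S(R)<\infty$ implies $R\otimes_S D_S$ semidualizing and $\tor^S_i(R,D_S)=0$), so exhibiting those two conclusions does not, without further argument, yield finiteness of $\gdim_{R_j}(R)$. The paper instead uses the biconditional half of Fact~\ref{f0202}: via an explicit injective resolution of $C$ it computes $\ext^i_{R_j}(R,R_j)=0$ for $i\geq 1$ and $\Hom_{R_j}(R,R_j)\cong C$, which is semidualizing, giving $\gdim_{R_j}(R)=0$. For (5): you only verify $\tor^Q_1(R_1,R_2)=0$ (from $I_1\cap I_2 = I_1I_2$), but your spectral-sequence collapse requires $\tor^Q_q(R_1,R_2)=0$ for all $q\geq 1$; the paper earns the higher vanishings through the chain of base-change Lemmas~\ref{step7}--\ref{step9}, proving $\tor^R_i(R_1,R_2)=0$ with $R_1\otimes_R R_2\cong Q$, then $\tor^{R_1}_i(R,Q)=0$ with $R\otimes_{R_1}Q\cong R_2$, and finally $\tor^Q_i(R_2,R_1)=0$ by transporting a projective resolution. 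For (3): the duality $\tateext^i_Q(R_1,R_2)\cong\tatetor^Q_{-i}(R_1,R_2)$ together with the ordinary Ext/Tor vanishings handles $|i|\geq 1$, but the degree-zero case $\tateext^0_Q(R_1,R_2)$ needs a separate argument; the paper supplies it from the exact sequence of~\cite[(5.8(3))]{avramov:aratc} by showing the comparison map $\nu\colon\Hom_Q(R_1,Q)\otimes_Q R_2 \to \Hom_Q(R_1,R_2)$ is surjective via an explicit commutative diagram. Moreover, your plan to ``apply the arguments above to $D_1,D_2$'' is circular: $D_1\cong I_1$ is the first syzygy of $R_1$ over $Q$, so $\tatetor^Q_\bullet(D_1,R_2)$ is merely a degree shift of $\tatetor^Q_\bullet(R_1,R_2)$ and provides no new information; and $D_1,D_2$ are not ring quotients of $Q$, so the change-of-ring lemmas cannot be reused for them.
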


\begin{proof}
For the sake of readability, we include the following roadmap of the proof.

\begin{outline}
The ring $Q$ is constructed as an iterated trivial extension of $R$.
As an $R$-module, it has the form
$Q=R\oplus C\oplus \cd\oplus D$
where $\cd=\hom_R(C,D)$.
The ideals $I_j$ are then given as $I_1=0\oplus 0\oplus \cd\oplus D$
and $I_2=0\oplus C\oplus 0\oplus D$.
The details for these constructions are contained in Steps~\ref{step2} and~\ref{step5}.
Conditions~\eqref{thm0303a}, \eqref{thm0303b} and~\eqref{thm0303x} are then verified in
Lemmas~\ref{step6}--\ref{step4}. The verification of conditions~\eqref{thm0303e}
and~\eqref{thm0303c} requires more work;
it is proved in Lemma~\ref{step10}, with the help of Lemmas~\ref{step7}--\ref{step9}.
Lemma~\ref{step11} contains the verification of
condition~\eqref{thm0303y}. The proof concludes with Lemma~\ref{llast}
which contains the verification of condition~\eqref{thm0303d}.
\end{outline}

The following two steps contain notation and facts for use through the rest of the proof.

\begin{step} \label{step2}
Set $R_1=R\ltimes C$, which is Cohen--Macaulay
with $\dim(R_1)=\dim(R)$;
see Facts~\ref{f0206}
and~\ref{f0208}.
The natural injection
$\epsilon_C\colon R\to R_1$ makes $R_1$ into a module-finite $R$-algebra, so
Fact~\ref{f0205} implies that the module
$D_1=\hom_R(R_1,D)$ is dualizing for $R_1$.
There
is a sequence of $R$-module isomorphisms
$$
D_1=\hom_R(R_1,D)\cong\hom_R(R\oplus C,D)\cong\hom_R(C,D)\oplus\hom_R(R,D)
\cong \cd\oplus D.
$$
It is straightforward to show that the resulting $R_1$-module structure
on $\cd\oplus D$ is given by the following formula:
$$(r,c)(\phi,d)=(r\phi,\phi(c)+rd).$$
The kernel of the natural epimorphism $\tau_{C}\colon R_1\to R$ is the
ideal $\ker(\tau_{C})\cong 0\oplus C$.

Fact~\ref{f0209} implies that the ring
$Q=R_1\ltimes D_1$ is local and Gorenstein.
The $R$-module isomorphism in the next display is by definition:
$$Q=R_1\ltimes D_1\cong R\oplus C\oplus \cd\oplus D.$$
It is straightforward to show that the resulting ring structure on
$Q$
is given by
\begin{equation*}
(r,c,\phi,d)(r',c',\phi',d')
=(rr',rc'+r'c,r\phi'+r'\phi,\phi'(c)+\phi(c')+rd'+r'd).
\end{equation*}
The kernel of the epimorphism $\tau_{D_1}\colon Q\to R_1$ is the
ideal
$$I_1=\ker(\tau_{D_1})\cong 0\oplus 0\oplus\cd\oplus D.$$
As a $Q$-module, this is isomorphic to the $R_1$-dualizing module $D_1$.
The kernel of the composition
$\tau_C\circ\tau_{D_1}\colon Q\to R$ is the ideal
$\ker(\tau_C\tau_{D_1})\cong 0\oplus C\oplus \cd\oplus D$.

Since $Q$ is Gorenstein and $\depth(R_1)=\depth(Q)$, Fact~\ref{f0204}
implies that $R_1$ is totally reflexive as a $Q$-module.
Using the the natural
isomorphism $\Hom_Q(R_1,Q)\xra\cong(0:_QI_1)$ given by $\psi\mapsto \psi(1)$,
one shows that
the map $\Hom_Q(R_1,Q)\to I_1$ given by
$\psi\mapsto \psi(1)$ is a well-defined $Q$-module isomorphism. 
Thus $I_1$ is totally reflexive over $Q$, and it follows that
$\Hom_Q(I_1,Q)\cong R_1$.
\end{step}

\begin{step}\label{step5}
Set $R_2=R\ltimes \cd$, which is Cohen--Macaulay
with $\dim(R_2)=\dim(R)$.
The injection
$\epsilon_{\cd}\colon R\to R_2$ makes $R_2$ into a module-finite $R$-algebra, so
the module
$D_2=\hom_R(R_2,D)$ is dualizing for $R_2$.
There
is a sequence of $R$-module isomorphisms
$$
D_2=\hom_R(R_2,D)\cong\hom_R(R\oplus \cd,D)\cong\hom_R(\cd,D)\oplus\hom_R(R,D)
\cong C\oplus D.
$$
The last isomorphism is from Fact~\ref{f0203}.
The resulting $R_2$-module structure
on $C\oplus D$ is given by the following formula:
$$(r,\phi)(c,d)=(r\phi,\phi(c)+rd).$$
The kernel of the natural epimorphism $\tau_{\cd}\colon R_2\to R$ is the
ideal $\ker(\tau_{\cd})\cong 0\oplus \cd$.

The ring
$Q'=R_2\ltimes D_2$ is local and Gorenstein.
There is a sequence of $R$-module isomorphisms
$$Q'=R_2\ltimes D_2\cong R\oplus C\oplus \cd\oplus D$$
and the resulting ring structure on
$R\oplus C\oplus \cd\oplus D$
is given by
\begin{align*}
(r,c,\phi,d)(r',c',\phi',d')
=(rr',rc'+r'c,r\phi'+r'\phi,\phi'(c)+\phi(c')+rd'+r'd).
\end{align*}
That is, we have an isomorphism of rings $Q'\cong Q$.
The kernel of the epimorphism $\tau_{D_2}\colon Q\to R_2$ is the
ideal
$$I_2=\ker(\tau_{D_2})\cong 0\oplus C\oplus 0\oplus D.$$
This is isomorphic, as a $Q$-module, to the dualizing module $D_2$.
The kernel of the composition
$\tau_{\cd}\circ\tau_{D_2}\colon Q\to R$ is the ideal
$\ker(\tau_{\cd}\tau_{D_2})\cong 0\oplus C\oplus \cd\oplus D$.

As in Step~\ref{step2}, the $Q$-modules $R_2$ and
$\Hom_Q(R_2,Q)\cong I_2$ are totally reflexive, and
$\Hom_Q(I_2,Q)\cong R_2$.
\end{step}

\begin{lem}[Verification of condition~\eqref{thm0303a} from Theorem~\ref{thm0303}]
\label{step6}
With the notation of Steps~\ref{step2}--\ref{step5},
there is a ring isomorphism
$R\cong Q/(I_1+I_2)$.
\end{lem}

\begin{proof}
Consider the following sequence of $R$-module isomorphisms:
\begin{align*}
Q/(I_1+I_2)
&\cong (R\oplus C\oplus \cd\oplus D)/((0\oplus 0\oplus \cd\oplus D)+(0\oplus C\oplus 0\oplus D))\\
&\cong (R\oplus C\oplus \cd\oplus D)/(0\oplus C\oplus \cd\oplus D))\\
&\cong R.
\end{align*}
It is straightforward to check that these are ring isomorphisms.
\end{proof}

\begin{lem}[Verification of condition~\eqref{thm0303b} from Theorem~\ref{thm0303}]
\label{step3}
With the notation of Steps~\ref{step2} and~\ref{step5},
each ring $R_j\cong Q/I_j$ is Cohen--Macaulay
with a  dualizing module $D_j$ and is not Gorenstein.
\end{lem}

\begin{proof}
It remains only to show that each ring $R_j$ is not Gorenstein,
that is, that $D_j$ is not isomorphic to $R_j$ as an $R_j$-module.

For $R_1$, suppose by way of contradiction
that there is an $R_1$-module isomorphism $D_1\cong R_1$.
It follows that this is an $R$-module isomorphism via the natural
injection $\epsilon_C\colon R\to R_1$.
Thus, we have $R$-module isomorphisms
$$\cd\oplus D\cong D_1\cong R_1\cong R\oplus C.$$
Computing minimal numbers of generators, we have
\begin{align*}
\mu_R(\cd)+\mu_R(D)
&=\mu_R(\cd\oplus D)
=\mu_R(R\oplus C)
=\mu_R(R)+\mu_R(C)\\
&=1+\mu_R(C)
\leq 1+\mu_R(C)\mu_R(\cd)
=1+\mu_R(D).
\end{align*}
The last step in this sequence follows from Fact~\ref{f0203}.
It follows that $\mu_R(\cd)=1$, that is, that $\cd$ is cyclic.
From the isomorphism $R\cong \Hom_R(C,C)$, one concludes
that $\ann_R(C)=0$, and hence
$\cd\cong R/\ann_R(\cd)\cong R$.
It follows that
$$C\cong\hom_R(\cd,D)\cong\hom_R(R,D)\cong D$$
contradicting the assumption that $C$ is not dualizing for $R$.
(Note that this uses the uniqueness statement from Fact~\ref{f0206}.)

Next,  observe that $\cd$ is not free and is not dualizing
for $R$; this follows from the isomorphism
$C\cong\hom_R(\cd,D)$ contained in Fact~\ref{f0203}, using the
assumption that $C$ is not free and not dualizing.
Hence, the proof that $R_2$ is not Gorenstein follows as in the previous paragraph.
\end{proof}

\begin{lem}[Verification of condition~\eqref{thm0303x} from Theorem~\ref{thm0303}]
\label{step4}
With the notation of Steps~\ref{step2}--\ref{step5},
we have $\gdim_{R_j}(R)=0$ for $j=1,2$.
\end{lem}

\begin{proof}
To show that $\gdim_{R_1}(R)=0$, it suffices
to show that $\ext^i_{R_1}(R,R_1)=0$ for all $i\geq 1$ and that
$\hom_{R_1}(R,R_1)\cong C$; see Fact~\ref{f0202}.
To this end, we note that there are isomorphisms of $R$-modules
$$\hom_R(R_1,C)\cong\hom_R(R\oplus C,C)
\cong\hom_R(C,C)\oplus\hom_R(R,C)\cong R\oplus C\cong R_1$$
and it is straightforward to check that the composition
$\hom_R(R_1,C)\cong R_1$ is an $R_1$-module isomorphism.
Furthermore, for $i\geq 1$ we have
$$\ext^i_R(R_1,C)\cong\ext^i_R(R\oplus C,C)
\cong\ext^i_R(C,C)\oplus\ext^i_R(R,C)=0.$$
Let $I$ be an injective resolution of $C$ as an $R$-module.
The previous two displays imply that
$\hom_R(R_1,I)$ is an injective resolution of $R_1$ as an $R_1$-module.
Using the 
fact that the composition $R\xra{\epsilon_C}R_1\xra{\tau_C}R$
is the identity $\id_R$,
we conclude that
$$\hom_{R_1}(R,\hom_R(R_1,I))\cong\hom_{R}(R\otimes_{R_1}R_1,I)
\cong\hom_R(R,I)\cong I$$
and hence
$$\ext^i_{R_1}(R,R_1)\cong\HH^i(\hom_{R_1}(R,\hom_R(R_1,I)))\cong\HH^i(I)
\cong\begin{cases} 0 & \text{if $i\geq 1$} \\ C & \text{if $i=0$} \end{cases}$$
as desired.\footnote{Note that the finiteness of $\gdim_{R_1}(R)$ can also be deduced
from~\cite[(2.16)]{holm:smarghd}.}

The proof for $R_2$ is similar.
\end{proof}

The next three results are for the proof of Lemma~\ref{step10}.

\begin{lem}\label{step7}
With the notation of Steps~\ref{step2} and~\ref{step5},
one has $\tor^R_i(R_1,R_2)=0$ for all $i\geq 1$,
and  there is an $R_1$-algebra isomorphism $R_1\otimes_R R_2\cong Q$.
\end{lem}

\begin{proof}
The Tor-vanishing comes from  the following sequence of
$R$-module isomorphisms
\begin{align*}
\tor^R_i(R_1,R_2)
&\cong\tor^R_i(R\oplus C,R\oplus \cd) \\
&\cong\tor^R_i(R,R) \oplus\tor^R_i(C,R) \oplus\tor^R_i(R,\cd) \oplus\tor^R_i(C,\cd) \\
&\cong\begin{cases}
R\oplus C\oplus\cd\oplus D & \text{if $i=0$} \\
0 & \text{if $i\neq 0$.} \end{cases}
\end{align*}
The first isomorphism is by definition;
the second isomorphism is elementary;
and the third isomorphism is from Fact~\ref{f0203}.

Moreover, it is straightforward to verify  that in the case $i=0$ the isomorphism
$R_1\otimes_R R_2\cong Q$ has the form
$\alpha\colon R_1\otimes_R R_2\xra\cong Q$
given by
$$(r,c)\otimes(r',\phi')\mapsto(rr',r'c,r\phi',\phi'(c)).$$
It is routine to check that this is a ring homomorphism,
that is, a ring isomorphism.
Let $\xi\colon R_1\to R_1\otimes_RR_2$ be given by
$(r,c)\mapsto (r,c)\otimes (1,0)$. Then one has
$\alpha\xi=\epsilon_{D_1}\colon R_1\to Q$.
It follows that $R_1\otimes_R R_2\cong Q$ as an $R_1$-algebra.
\end{proof}

\begin{lem}\label{step8}
Continue with the notation of Steps~\ref{step2} and~\ref{step5}.
In the tensor product $R\otimes_{R_1}Q$ we have
$1\otimes(0,c,0,d)=0$
for all $c\in C$ and all $d\in D$.
\end{lem}

\begin{proof}
Recall that Fact~\ref{f0203} implies that
the evaluation map $C\otimes_R\cd\to D$ given by $c'\otimes\phi\mapsto\phi(c')$
is an isomorphism. Hence, there exist $c'\in C$ and $\phi\in\cd$ such that
$d=\phi(c')$.
This explains the first equality in the  sequence
\begin{equation} \label{disc0201b}
\begin{split}
1\otimes(0,0,0,d)
&=1\otimes(0,0,0,\phi(c'))
=1\otimes[(0,c')(0,0,\phi,0)]\\
&=[1(0,c')]\otimes(0,0,\phi,0)
=0\otimes(0,0,\phi,0)
=0.
\end{split}
\end{equation}
The second equality is by definition of the $R_1$-module structure on $Q$;
the third equality is from the fact that we are tensoring over $R_1$;
the fourth equality is from the fact that the $R_1$-module structure on $R$ comes
from the natural surjection $R_1\to R$, with the fact that $(0,c)\in 0\oplus C$ which is the
kernel of this surjection.

On the other hand, using similar reasoning, we have
\begin{equation} \label{disc0201c}
\begin{split}
1\otimes(0,c,0,0)
&=1\otimes[(0,c)(1,0,0,0)]
=[1(0,c)]\otimes(1,0,0,0)\\
&=0\otimes(1,0,0,0)
=0.
\end{split}
\end{equation}
Combining~\eqref{disc0201b} and~\eqref{disc0201c} we have
$$1\otimes(0,c,0,d)
=[1\otimes(0,0,0,d)]+[1\otimes(0,c,0,0)]
=0$$
as claimed.
\end{proof}

\begin{lem}\label{step9}
With the notation of Steps~\ref{step2} and~\ref{step5},
one has $\tor^{R_1}_i(R,Q)=0$ for all $i\geq 1$,
and there is a $Q$-module isomorphism $R\otimes_{R_1}Q\cong R_2$.
\end{lem}

\begin{proof}
Let $P$ be an $R$-projective resolution of $R_2$.
Lemma~\ref{step7} implies that $R_1\otimes_RP$ is a projective
resolution of $R_1\otimes_RR_2\cong Q$ as an $R_1$-module.
From the following sequence of isomorphisms
$$R\otimes_{R_1}(R_1\otimes_RP)
\cong (R\otimes_{R_1}R_1)\otimes_RP
\cong R\otimes_RP
\cong P$$
it follows that, for $i\geq 1$, we have
$$\tor^{R_1}_i(R,Q)
\cong\HH_i(R\otimes_{R_1}(R_1\otimes_RP))
\cong\HH_i(P)
=0$$
where the final vanishing comes from the assumption that $P$ is a resolution
of a module and $i\geq 1$.

This reasoning shows that there is an $R$-module isomorphism
$\beta\colon R_2\xra\cong R\otimes_{R_1}Q$. This isomorphism is
equal to the composition
$$R_2
\xra\cong R\otimes_R R_2
\xra\cong R\otimes_{R_1}(R_1\otimes_RR_2)
\xra[R\otimes_{R_1}\alpha]{\cong} R\otimes_{R_1}Q
$$
and is therefore given by
\begin{equation}\label{eqz}
(r,\phi)\mapsto 1\otimes(r,\phi)\mapsto 1\otimes[(1,0)\otimes(r,\phi)]\mapsto 1\otimes(r,0,\phi,0).
\end{equation}
We claim that $\beta$ is a $Q$-module isomorphism.
Recall that
the $Q$-module structure on $R_2$ is given via the natural surjection $Q\to R_2$,
and so is described as
$$(r,c,\phi,d)(r',\phi')=(r,\phi)(r',\phi')=(rr',r\phi'+r'\phi).$$
This explains the first equality in the following sequence
\begin{align*}
\beta((r,c,\phi,d)(r',\phi'))
&=\beta(rr',r\phi'+r'\phi)
=1\otimes (rr',0,r\phi'+r'\phi,0).
\end{align*}
The second equality is by~\eqref{eqz}.
On the other hand, the definition of $\beta$ explains
the first equality in the  sequence
\begin{align*}
(r,c,\phi,d)\beta(r',\phi')
&=(r,c,\phi,d)[1\otimes(r',0,\phi',0)]\\
&=1\otimes[(r,c,\phi,d)(r',0,\phi',0)]\\
&=1\otimes(rr',r'c,r\phi'+r'\phi,r'd+\phi'(c))\\
&=[1\otimes(rr',0,r\phi'+r'\phi,0)]+[1\otimes(0,r'c,0,r'd+\phi'(c))]\\
&=1\otimes(rr',0,r\phi'+r'\phi,0).
\end{align*}
The second equality is from the definition of the $Q$-modules structure on
$R\otimes_{R_1}Q$;
the third equality is from the definition of the multiplication in $Q$;
the fourth equality is by bilinearity;
and the fifth equality is by Lemma~\ref{step8}.
Combining these two sequences, we conclude that $\beta$ is a $Q$-module isomorphism,
as claimed.
\end{proof}

\begin{lem}[Verification of conditions~\eqref{thm0303e}--\eqref{thm0303c} from Theorem~\ref{thm0303}]
\label{step10}
With the notation of Steps~\ref{step2} and~\ref{step5},
the modules $\Hom_Q(R_1,R_2)$ and $\Hom_Q(R_2,R_1)$ are not cyclic.
Also, one has $\ext^i_Q(R_1,R_2)=0=\ext^i_Q(R_2,R_1)$
and $\tor^Q_i(R_1,R_2)=0$ for all $i\geq 1$; in particular,
there is an equality $I_1\cap I_2=I_1I_2$.
\end{lem}

\begin{proof}
Let $L$ be a projective resolution of $R$ over $R_1$.
Lemma~\ref{step9} implies that the complex $L\otimes_{R_1}Q$ is a projective
resolution of $R\otimes_{R_1}Q\cong R_2$ over $Q$.
We have isomorphisms
$$(L\otimes_{R_1}Q)\otimes_QR_1
\cong L\otimes_{R_1}(Q\otimes_QR_1)
\cong L\otimes_{R_1}R_1
\cong L$$
and it follows that, for $i\geq 1$, we have
$$\tor^Q_i(R_2,R_1)
\cong\HH_i((L\otimes_{R_1}Q)\otimes_QR_1)
\cong\HH_i(L)=0$$
since $L$ is a projective resolution.

The equality $I_1\cap I_2=I_1 I_2$
follows from the  direct computation
$$
I_1\cap I_2
=(0\oplus 0\oplus \cd\oplus D)\cap(0\oplus C\oplus 0\oplus D)
=0\oplus 0\oplus 0\oplus D=I_1I_2$$
or from the  sequence
$(I_1\cap I_2)/(I_1 I_2)\cong\tor^Q_1(Q/I_1,Q/I_2)=0$.

Let $P$ be a projective resolution of $R_1$ over $Q$. From the fact that $\tor_i^Q(R_2,R_1)=0$ for all $i\ge 1$ we get that $P\otimes_QR_2$ is a projective
resolution of $R$ over $R_2$.  Since the complexes $\Hom_Q(P,R_2)$ and $\Hom_{R_2}(P\otimes_QR_2,R_2)$ are isomorphic, we therefore have the isomorphisms
\[
\ext^i_Q(R_1,R_2)\cong\ext^i_{R_2}(R,R_2)
\]
for all $i\ge 0$.
By the fact that $\gdim_{R_2}(R)=0$, we conclude that
\begin{align*}
\ext^i_{Q}(R_1,R_2)
&\cong\begin{cases}
\cd & \text{if $i=0$} \\
0 & \text{if $i\neq 0$.}
\end{cases}
\end{align*}
Since $C$ is not dualizing, the module
$\Hom_Q(R_1,R_2)\cong\ext^0_{Q}(R_1,R_2)\cong\cd$ is not cyclic.

The verification for $\Hom_Q(R_2,R_1)$ and $\ext^i_{Q}(R_2,R_1)$
is similar.
\end{proof}

\begin{lem}[Verification of condition~\eqref{thm0303y} from Theorem~\ref{thm0303}]
\label{step11}
With the notation of Steps~\ref{step2} and~\ref{step5},
there is an $R$-module isomorphism $D_1\otimes_QD_2\cong D$,
and for all $i\geq 1$ we have $\tor^Q_i(D_1,D_2)=0$.
\end{lem}

\begin{proof}
There is a short exact sequence of $Q$-module homomorphisms
$$0\to D_1\to Q\xra{\tau_{D_1}}R_1\to 0.$$
For all $i\geq 1$, we have
$\tor^Q_i(Q,R_2)=0=\tor^Q_i(R_1,R_2)$,
so the long exact sequence in $\tor^Q_i(-,R_2)$ associated to the
displayed sequence implies that
$\tor^Q_i(D_1,R_2)=0$ for all $i\geq 1$.
Consider the next short exact sequence of $Q$-module homomorphisms
$$0\to D_2\to Q\xra{\tau_{D_2}}R_2\to 0.$$
The associated long exact sequence in
$\tor^Q_i(D_1,-)$ implies that
$\tor^Q_i(D_1,D_2)=0$ for all $i\geq 1$.

It is straightforward to verify the following sequence of $Q$-module isomorphisms
\begin{align*}
R\otimes_{R_1}D_1
&\cong \left(\frac{R\ltimes C}{0\oplus C}\right)\otimes_{R\ltimes C}(C^{\dagger}\oplus D)
\cong \frac{C^{\dagger}\oplus D}{(0\oplus C)(C^{\dagger}\oplus D)}
\cong \frac{C^{\dagger}\oplus D}{0\oplus D}
\cong C^{\dagger}
\end{align*}
and similarly
$$R\otimes_{R_2}D_2
\cong C.$$
These combine to explain the third isomorphism in the following sequence:
\begin{align*}
D_1\otimes_QD_2
&\cong R\otimes_Q(D_1\otimes_QD_2)\cong (R\otimes_QD_1)\otimes_R(R\otimes_QD_2)
\cong C^\dagger\otimes_RC
\cong D.
\end{align*}
For the first isomorphism, use the fact that $D_j$ is annihilated by $D_j=I_j$ for $j=1,2$
to conclude that $D_1\otimes_QD_2$ is annihilated by $I_1+I_2$;
it follows that $D_1\otimes_QD_2$ is naturally a module over the quotient
$Q/(I_1+I_2)\cong R$.
The second isomorphism is standard, and the fourth one is from
Fact~\ref{f0203}.
\end{proof}

\begin{lem}[Verification of condition~\eqref{thm0303d} from Theorem~\ref{thm0303}]
\label{llast}
With the notation of Steps~\ref{step2}--\ref{step5},
we have
$\tatetor^Q_i(R_1,R_2)=0=\tateext^i_Q(R_1,R_2)$
and
$\tatetor^Q_i(R_2,R_1)=0=\tateext^i_Q(R_2,R_1)$
for all $i\in\bbz$.
\end{lem}

\begin{proof}
We verify that
$\tatetor^Q_i(R_1,R_2)=0=\tateext^i_Q(R_1,R_2)$.
The proof of the other vanishing is similar.

Recall from Step~\ref{step2} that $R_1$ is  totally reflexive
as a $Q$-module.  
We construct a complete resolution of $R_1$ over $Q$
by splicing a minimal $Q$-free resolution $P$ of $R_1$ with its dual $P^*=\Hom_Q(P,Q)$.
Using the fact that $R_1^*$ is isomorphic to $ I_1$, the first syzygy of $R_1$ in $P$,
we conclude that $X^*\cong X$.
This explains the second isomorphism in the next sequence
wherein $i$ is an arbitrary integer:
\begin{equation}\label{iso23}
\begin{split}
\tatetor^Q_i(R_1,R_2)
&\cong\HH_i(X\otimes_QR_2)
\cong\HH_i(X^*\otimes_QR_2)\\
&\cong\HH_i(\Hom_Q(X,R_2))
\cong \tateext^{-i}_Q(R_1,R_2).
\end{split}
\end{equation}
The third isomorphism is standard,
since each $Q$-module $X_i$ is finitely generated and free, and the other
isomorphisms are by definition.

For $i\geq 1$, the complex $X$ provides the
second  steps in the next displays:
\begin{align*}
\tateext^{-i}_Q(R_1,R_2)
\cong\tatetor^Q_i(R_1,R_2)
&\cong\tor^Q_i(R_1,R_2) =0 \\
\tatetor^Q_{-i}(R_1,R_2)
\cong\tateext^i_Q(R_1,R_2)
&\cong\ext^i_Q(R_1,R_2) =0.
\end{align*}
The first  steps are from~\eqref{iso23}, and the third
 steps are from Lemma~\ref{step10}.

To complete the proof it suffices by~\eqref{iso23} to show that
$\tateext^0_Q(R_1,R_2) =0$.
For this, we recall the exact sequence
\begin{equation*}
0\to
\Hom_Q(R_1,Q)\otimes_QR_2\xra{\nu}
\Hom_Q(R_1,R_2)\to
\tateext^0_Q(R_1,R_2)\to 0
\end{equation*}
from~\cite[(5.8(3))]{avramov:aratc}.
Note that this uses the fact that $R_1$ is totally reflexive as a $Q$-module,
with the condition $\tateext^{-1}_Q(R_1,R_2)=0$ which we have already verified.
Also, the map $\nu$ is given by the formula
$\nu(\psi\otimes r_2)=\psi_{r_2}\colon R_1\to R_2$
where $\psi_{r_2}(r_1)=\psi(r_1)r_2$.
Thus, to complete the proof, we need only show that the map
$\nu$ is surjective.

As with the isomorphism $\alpha\colon\Hom_Q(R_1,Q)\xra\cong I_1$,
it is straightforward to show that the map $\beta\colon\Hom_Q(R_1,R_2) \to\cd$ given
by $\phi\mapsto\phi(1)$ is a well-defined $Q$-module isomorphism.
Also, from Lemma \ref{step10} we have that $I_1I_2=0\oplus0\oplus0\oplus D$,
considered as a subset of $I_1=0\oplus0\oplus\cd\oplus D\subset
R\oplus C\oplus \cd\oplus D=Q$. In particular,
the map $\sigma\colon I_1/I_1I_2\to \cd$ given by
$\ol{(0,0,f,d)}\mapsto f$ is a well-defined $Q$-module isomorphism.

Finally, it is straightforward to show that the following diagram commutes:
$$
\xymatrix{
\Hom_Q(R_1,Q)\otimes_QR_2
\ar[d]_{\alpha\otimes_QR_2}^-{\cong}
\ar[rrr]^-{\nu}
&&&\Hom_Q(R_1,R_2)
\ar[d]_{\beta}^-{\cong} \\
I_1\otimes_QR_2
\ar[r]^-=
&I_1\otimes_QQ/I_2
\ar[r]^-{\delta}_-{\cong}
&I_1/I_1I_2
\ar[r]^-{\sigma}_-{\cong}
& \cd.}$$
From this, it follows that $\nu$ is surjective, as desired.
\end{proof}

This completes the proof of Theorem~\ref{thm0303}.
\end{proof}

\section{Constructing Rings with Non-trivial Semidualizing Modules} \label{sec99}

We begin this section with the following application of Theorem~\ref{thm0302}.

\begin{prop} \label{thm9901}
Let $R_1$ be a local Cohen--Macaulay ring with dualizing module $D_1\not\cong R_1$
and $\dim(R_1)\geq 2$.
Let $\x=x_1,\ldots,x_n\in R_1$ be an $R_1$-regular sequence with $n\geq 2$,
and fix an integer $t\geq 2$. Then the ring $R=R_1/(\x)^t$ has a semidualizing module
$C$ that is neither dualizing no free.
\end{prop}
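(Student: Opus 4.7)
The plan is to apply Theorem~\ref{thm0302}. Taking $Q = R_1 \ltimes D_1$ (local Gorenstein by Fact~\ref{f0209}) and $I_2 = 0 \oplus D_1 = \ker(\tau_{D_1})$, one has $Q/I_2 \cong R_1$, which is Cohen--Macaulay and not Gorenstein because $D_1 \not\cong R_1$. The elements $\tilde{\x} = (x_1,0),\ldots,(x_n,0) \in Q$ form a $Q$-regular sequence, since $\x$ is $R_1$-regular by hypothesis and $D_1$-regular because $D_1$ is a maximal Cohen--Macaulay $R_1$-module (Fact~\ref{f0206}). Set $I_1 = (\tilde{\x})^t Q$. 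A direct computation in the trivial extension gives $I_1 = (\x)^t \oplus (\x)^t D_1$, so $I_1+I_2 = (\x)^t \oplus D_1$ and $Q/(I_1+I_2) \cong R_1/(\x)^t = R$, which is condition~(1) of Theorem~\ref{thm0302}.

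The $(\tilde{\x})$-adic filtration on $Q/I_1 = Q/(\tilde{\x})^t$ has graded pieces $(\tilde{\x})^i/(\tilde{\x})^{i+1}$ that are free over $Q/(\tilde{\x})$, since $\tilde{\x}$ is $Q$-regular. This shows $Q/I_1$ is Cohen--Macaulay (completing condition~(2)) and, combined with the Koszul resolution of $Q/(\tilde{\x})$, yields $\pd_Q(Q/I_1) \leq n$; condition~(3) on Tate cohomology is therefore automatic by the remark preceding Theorem~\ref{thm0302}. The same filtration reduces condition~(5) to $\tor^Q_i(Q/(\tilde{\x}), R_1) = 0$ for $i \geq 1$, which holds because $K(\tilde{\x};Q) \otimes_Q R_1 \cong K(\x;R_1)$ is acyclic in positive degrees (as $\x$ is $R_1$-regular).

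The main task, and the main obstacle, is condition~(4). Hom-tensor adjointness combined with condition~(5) yields $\ext^c_Q(Q/I_1, Q/I_2) \cong \ext^c_{R_1}(R, R_1)$ for every $c \geq 0$; I will show this is non-cyclic at $c = n$. Consider the short exact sequence
\[
0 \to (\x)^{t-1}/(\x)^t \to R \to R_1/(\x)^{t-1} \to 0,
\]
whose kernel is free of rank $r = \binom{n+t-2}{t-1}$ over $R_1/(\x)$. Koszul self-duality for the regular sequence $\x$ gives $\ext^i_{R_1}(R_1/(\x), R_1) = 0$ for $i \neq n$ and $\ext^n_{R_1}(R_1/(\x), R_1) \cong R_1/(\x)$; these statements transfer to the free kernel. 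An analogous filtration argument gives $\pd_{R_1}(R_1/(\x)^{t-1}) \leq n$, so the long exact sequence in $\ext_{R_1}(-, R_1)$ collapses to
\[
0 \to \ext^n_{R_1}(R_1/(\x)^{t-1}, R_1) \to \ext^n_{R_1}(R, R_1) \to (R_1/(\x))^r \to 0.
\]
Because $n,t \geq 2$, we have $r \geq 2$, and $(R_1/(\x))^r$ requires $r$ generators as an $R_1$-module by Nakayama. Hence $\ext^n_{R_1}(R, R_1)$ cannot be cyclic, and Theorem~\ref{thm0302} produces the required semidualizing $R$-module.
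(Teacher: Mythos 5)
Your proof is correct and builds the same Gorenstein presentation as the paper — $Q = R_1 \ltimes D_1$, with one ideal equal to $0 \oplus D_1$ and the other equal to the $t$-th power of the ideal generated by the lifted regular sequence $\tilde{\x}$ — and it handles conditions~\eqref{thm0302a}, \eqref{thm0302b}, \eqref{thm0302d} in the same way (filtration argument for $\pd_Q(Q/(\tilde{\x})^t)<\infty$). You swap the labels $I_1\leftrightarrow I_2$ relative to the paper, which is harmless since Theorem~\ref{thm0302} only needs $Q/I_2$ non-Gorenstein and the remaining conditions are insensitive to the swap. The genuine divergence is in conditions~\eqref{thm0302e} and \eqref{thm0302c}. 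For~\eqref{thm0302c}, the paper invokes the general rigidity fact that a totally reflexive $Q$-module is Tor-independent from any module of finite flat dimension, whereas you reduce along the $(\tilde{\x})$-adic filtration to the Koszul computation $K(\tilde{\x};Q)\otimes_Q R_1 \cong K(\x;R_1)$; yours is a little longer but entirely elementary. For~\eqref{thm0302e}, the paper identifies $Q/(\tilde{\x})^tQ \cong R \ltimes [D_1/(\x)^tD_1]$, reads off $\Hom_Q(R_1,\, Q/(\tilde{\x})^tQ)$ as the annihilator of $0\oplus D_1$, and observes that $D_1/(\x)^tD_1$ is non-cyclic because $D_1$ is. You instead push $\ext^c_Q(Q/I_1,Q/I_2)$ through the change of rings to $\ext^c_{R_1}(R,R_1)$ and use the short exact sequence $0\to(\x)^{t-1}/(\x)^t\to R\to R_1/(\x)^{t-1}\to 0$ with Koszul self-duality to produce a surjection $\ext^n_{R_1}(R,R_1)\onto(R_1/(\x))^r$ with $r=\binom{n+t-2}{t-1}\geq 2$. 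This route is a bit more work, but it pays off: it identifies the semidualizing module $C=\ext^n_{R_1}(R,R_1)$ explicitly and gives a concrete lower bound of $\binom{n+t-2}{t-1}$ on its minimal number of generators, which the paper's annihilator computation does not provide.
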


\begin{proof}
We verify the conditions~\eqref{thm0302a}--\eqref{thm0302c} from
Theorem~\ref{thm0302}.

\eqref{thm0302a}
Set $Q=R_1\ltimes D_1$ and $I_1=0\oplus D_1\subset Q$.
Consider the elements
$y_i=(x_i,0)\in Q$ for $i=1,\ldots,n$.
It is straightforward to show that
the sequence $\y=y_1,\ldots,y_n$ is $Q$-regular.
With $R_2=Q/(\y)^t$, we have
$R\cong R_1\otimes_QR_2$.
That is, with $I_2=(\y)^t$, condition~\eqref{thm0302a}
from Theorem~\ref{thm0302} is satisfied.

\eqref{thm0302b}
The assumption $D_1\not\cong R_1$ implies that $R_1$ is not Gorenstein.
It is well-known that $\type(R_2)=\binom{t+n-2}{n-1}>1$, so $R_2$ is
not Gorenstein.

\eqref{thm0302d}
By Fact~\ref{f0297}, it suffices to show that
$\pd_Q(R_2)<\infty$.
Since $\y$ is a $Q$-regular sequence,
the associated graded ring $\oplus_{i=0}^\infty (\y)^i/(\y)^{i+1}$ is isomorphic
as a $Q$-algebra to the polynomial ring
$Q/(\y)[Y_1,\ldots,Y_n]$. It follows that the $Q$-module $R_2\cong Q/(\y)^t$
has a finite filtration $0=N_r\subset N_{r-1}\subset\cdots\subset N_0=R_2$
such that $N_{i-1}/N_i\cong Q/(\y)$ for $i=1,\ldots,r$.
Since each quotient $N_{i-1}/N_i\cong Q/(\y)$ has finite projective dimension over $Q$,
the same is true for $R_2$.

\eqref{thm0302e}
The following isomorphisms are straightforward to verify:
$$
R_2
=Q/(\y)^t
\cong [R_1/(\x)^t]\ltimes [D_1/(\x)^tD_1]
\cong R\ltimes [D_1/(\x)^tD_1].$$
Since $\x$ is $R_1$-regular, it is also $D_1$-regular.
Using this, one checks readily that
\begin{align*}
\Hom_Q(R_1,R_2)
&\cong \{z\in R_2\mid I_1z=0\}
=0\oplus [D_1/(\x)^tD_1].
\end{align*}
Since $D_1$ is not cyclic and $\x$ is contained in the maximal ideal of $R_1$,
we conclude that $\Hom_Q(R_1,R_2)\cong D_1/(\x)^tD_1$ is not cyclic.

\eqref{thm0302c}
The $Q$-module $R_1$ is totally reflexive; see Facts~\ref{f0204}--\ref{f0202}.
It follows from~\cite[(2.4.2(b))]{christensen:gd} that
$\tor^Q_i(R_1,N)=0$ for all $i\geq 1$ and for all $Q$-modules $N$ of finite flat dimension;
see also~\cite[(4.13)]{auslander:smt}. Thus, we have
$\tor^Q_i(R_1,R_2)=0$ for all $i\geq 1$.
\end{proof}

\begin{disc}
One can use the results of~\cite{avramov:rhafgd} directly to show that the ring
$R$ in Proposition~\ref{thm9901} has a non-trivial semidualizing module.
(Specifically, the relative dualizing module of the natural surjection $R_1\to R$
works.) However, our proof illustrates the concrete criteria of Theorem~\ref{thm0302}.
\end{disc}

We conclude by showing that there exists a Cohen--Macaulay local ring $R$
that does not admit a dualizing module and does admit a semidualizing module
$C$ such that $C\not\cong R$. The construction is essentially
from~\cite[p.~92, Example]{vasconcelos:dtmc}.

\begin{ex} \label{exlast}
Let $A$ be a local Cohen--Macaulay ring that does not admit a dualizing
module. (Such rings are known to exist by a result of Ferrand and
Raynaud~\cite{ferrand}.)
Set $R=A[X,Y]/(X,Y)^2\cong A\ltimes A^2$ and consider the $R$-module
$C=\Hom_A(R,A)$. Since $R$ is finitely generated and free as an $A$-module,
Fact~\ref{f0202} shows that $C$ is a semidualizing $R$-module.
The composition of the natural inclusion
$A\to R$ and the natural surjection $R\to A$ is the identity on $A$.

If $R$ admitted a dualizing module $D$, then the module
$\Hom_R(A,D)$ would be  a dualizing $A$-module by
Fact~\ref{f0205}, contradicting our assumption on $A$.
(Alternately, since $A$ is not a homomorphic image of a Gorenstein ring,
we conclude from the surjection $R\to A$ that $R$ is not a homomorphic image of a Gorenstein ring.)

We show that $C\not\cong R$. It suffices to show that
$\Hom_R(A,C)\not\cong\Hom_R(A,R)$.
We compute:
\begin{gather*}
\Hom_R(A,C)
\cong\Hom_R(A,\Hom_A(R,A))
\cong\Hom_A(R\otimes_RA,A)
\cong\Hom_A(A,A)
\cong A
\\
\Hom_R(A,R)
\cong\{r\in R\mid (0\oplus A^2)r=0\}
=0\oplus A^2
\cong A^2
\end{gather*}
which gives the  desired conclusion.
\end{ex}

\section*{Acknowledgments}
We are grateful to Lars W.\ Christensen for helpful comments about this work.

%\bibliography{../new}

\providecommand{\bysame}{\leavevmode\hbox to3em{\hrulefill}\thinspace}
\providecommand{\MR}{\relax\ifhmode\unskip\space\fi MR }
% \MRhref is called by the amsart/book/proc definition of \MR.
\providecommand{\MRhref}[2]{%
  \href{http://www.ams.org/mathscinet-getitem?mr=#1}{#2}
}
\providecommand{\href}[2]{#2}

\end{document}